\documentclass[reqno,11pt]{amsart}
\usepackage{amssymb, amsmath}
\usepackage{graphicx}
\usepackage{cite}

\usepackage{subfig, tikz}
\usetikzlibrary{decorations.pathmorphing}
\usepackage{graphics}

\usepackage{hyperref}
\usepackage[T1]{fontenc}

\newcommand{\sech}{\,{\rm sech}\,}

\newcommand{\td}{\text{d}}

\usepackage{bm}
\usepackage{enumerate}
\theoremstyle{plain}
\newtheorem{theorem}{Theorem}[section]
\newtheorem{lemma}[theorem]{Lemma}
\newtheorem{prop}[theorem]{Proposition}
\newtheorem{cor}[theorem]{Corollary}

\newtheorem{proposition}[theorem]{Proposition}
\theoremstyle{definition}
\newtheorem{remark}[theorem]{Remark}
\newtheorem{definition}[theorem]{Definition}

\newtheorem{example}[theorem]{Example}

\usepackage{ulem}
\usepackage{soul}
\usepackage{color}

\numberwithin{equation}{section}

\def\be{\begin{equation}}
\def\ee{\end{equation}}
\def\bea{\begin{eqnarray}}
\def\eea{\end{eqnarray}}

\newcounter{mnotecount}[section]

\renewcommand{\themnotecount}{\thesection.\arabic{mnotecount}}

\newcommand{\mnote}[1]
{\protect{\stepcounter{mnotecount}}$^{\mbox{\footnotesize
$
\bullet$\themnotecount}}$ \marginpar{
\raggedright\tiny\em
$\!\!\!\!\!\!\,\bullet$\themnotecount: #1} }

\begin{document}
\title[]{On toric self-dual Einstein gravitational instantons}
\author{Bernardo Araneda}
\address[Bernardo Araneda]{
School of Mathematics and Maxwell Institute for Mathematical Sciences\\ University of Edinburgh\\
King's Buildings, Edinburgh, EH9 3JZ, UK,  \newline
Max-Planck-Institut f\"ur Gravitationsphysik (Albert-Einstein-Institut), 
Am M\"uhlenberg 1, D-14476 Potsdam, Germany} 
\email{baraneda@ed.ac.uk, bernardo.araneda@aei.mpg.de}
\author{James Lucietti}
\address[James Lucietti]{
School of Mathematics and Maxwell Institute for Mathematical Sciences\\ University of Edinburgh\\
King's Buildings, Edinburgh, EH9 3JZ, UK.}
\email{j.lucietti@ed.ac.uk}
\author{Virinchi Rallabhandi}
\address[Virinchi Rallabhandi]{
School of Mathematics and Maxwell Institute for Mathematical Sciences\\ University of Edinburgh\\
King's Buildings, Edinburgh, EH9 3JZ, UK.}
\email{v.v.rallabhandi@sms.ed.ac.uk}


\begin{abstract}
We consider the classification of  toric self-dual Einstein gravitational instantons with negative cosmological constant. As is well known, any Killing vector field on a self-dual Einstein manifold defines a local conformal K\"ahler structure. We prove that if the conformal K\"ahler structure associated to one of the torus Killing fields is global and extends to an ALE manifold with no additional fixed points,  then the corresponding self-dual Einstein instanton  is precisely given by the infinite class of multipole solutions constructed by Calderbank, Pedersen and Singer.
\end{abstract}

\maketitle

\section{Introduction}

A gravitational instanton is a complete Riemannian manifold $(M,g)$ that satisfies the Einstein equation
\begin{equation}
    {\rm Ric}_{g}=\lambda g,  \label{eq:Ein}
\end{equation}
where ${\rm Ric}_{g}$ is the Ricci tensor of $g$ and $\lambda \in \mathbb{R}$ is called the cosmological constant.
If $\lambda =0$ then there is the additional requirement than $M$ is non-compact and `asymptotically flat' in the sense of a prescribed curvature decay in some asymptotic region.  In recent years there has been impressive progress in the classification of Ricci-flat instantons that possess special geometry, culminating in an essentially complete understanding for hyper-K\"ahler instantons and near complete for  Hermitian (non-K\"ahler) instantons, see e.g. \cite{Minerbe, ChenChen, Biquard:2021gwj, Li23a, Li23b, Aksteiner:2023djq, Araneda:2025uqo, SunZhang, LiSun2}. Furthermore, generic toric Ricci-flat instantons can be classified by adapting the methods used for stationary and axisymmetric spacetimes in General Relativity (GR)~\cite{ChenTeo0, Kunduri:2021xiv, Kunduri:2026xvc}, which in particular have been used to construct remarkable new examples that violate the Riemannian no-hair theorem~\cite{ChenTeo, Teo, LiSun}.  Nevertheless, the classification of generic Ricci-flat instantons remains a notable open problem.

If $\lambda\neq 0$ the classification of Einstein metrics is an even more difficult problem and far fewer results are known. Again, it is natural to assume symmetries or special geometric structures that render this more tractable. Perhaps the most stringent --  still nontrivial -- assumption is that $(M,g)$ is self-dual and satisfies the Einstein equation \eqref{eq:Ein}. This means the Weyl tensor is (anti)self-dual and for $\lambda=0$ reduces to the aforementioned  hyper-K\"ahler instantons (locally). If $\lambda>0$ then a complete $(M,g)$ is necessarily compact (Myers' theorem) and it is known that the only  self-dual Einstein manifolds in this case are $S^4$ and $\mathbb{CP}^2$ with their canonical metrics, see e.g. \cite[Theorem 13.30]{Besse}. However, for $\lambda<0$ the classification of self-dual Einstein manifolds is open and it is the purpose of this paper to address this. 
In fact, even  toric gravitational instantons with $\lambda <0$ are much more poorly understood than their $\lambda=0$ counterparts. This is because the Einstein equations with $\lambda\neq 0$ for toric solutions are not integrable  in contrast to the $\lambda=0$ case. Indeed, this is also the case for stationary and axisymmetric spacetimes in GR and is the main reason why no analogue of the no-hair theorem is known for black holes spacetimes with $\lambda\neq 0$. 

Remarkably, Calderbank and Pedersen (CP) showed that the general local form of the metric for toric self-dual Einstein metrics with $\lambda \neq 0$ can be given in terms of a single function that is an eigenfunction of the scalar Laplacian on an auxiliary hyperbolic half-plane~\cite{Calderbank:2001uz}. Therefore,  this class of metrics is determined by a linear equation and they exploited this to write down a large class of `multipole' solutions by superposing a basic solution. Surprisingly, despite this local family of metrics being known for 20 years or so, it appears that a complete global analysis of the CP multipole solutions has not been performed. Notably, Calderbank and Singer showed that this class does include an infinite family of toric self-dual Einstein manifolds with $\lambda<0$ by exhibiting a sufficient set of conditions for the CP multipole solutions to extend to smooth metrics on complete manifolds~\cite{Calderbank:2002gy} (this builds on prior work by Joyce \cite{Joyce}).  It is well known that a Killing vector field on a self-dual Einstein manifold defines a (local) conformal K\"ahler structure that is scalar-flat~\cite{Tod1}. The Calderbank-Singer instantons were constructed by choosing the conformal K\"ahler structure with respect to one of the torus Killing fields to be a family of toric asymptotically locally Euclidean (ALE) scalar-flat K\"ahler manifolds defined on toric resolutions of $\mathbb{C}^2/\Gamma$. These generalise the structure of the Poincar\'e ball model of hyperbolic space, which recall is an open ball in $\mathbb{R}^4$ with metric conformal to the Euclidean metric (which is of course K\"ahler).  In this paper we provide a classification theorem for toric self-dual negative Einstein gravitational instantons for which one of the torus Killing fields defines a K\"ahler structure that extends to an ALE manifold. We will refer to this class of self-dual negative Einstein instantons as being {\it conformally ALE} (see Definition \ref{def:conformalALE}).  Our main result is captured by the following.

\begin{theorem}\label{thm:main}
Let $(M,g)$ be a toric conformally ALE self-dual Einstein gravitational instanton with $\lambda<0$. If the torus action has no additional fixed points in the conformal ALE manifold, then $(M,g)$ must be a Calderbank-Pedersen-Singer multi-pole instanton.
\end{theorem}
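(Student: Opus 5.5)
We plan to reduce the classification to a uniqueness statement for toric ALE scalar-flat K\"ahler metrics, followed by a Laplace-type uniqueness argument on the hyperbolic half-plane.

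\emph{Step 1: pass to the conformal K\"ahler structure.} Let $K$ be the torus Killing field whose associated (Tod) conformal K\"ahler structure is global. By assumption, $g = f^{-2} h$, where $h$ is a toric scalar-flat K\"ahler metric and $f$ is (up to normalisation) the Killing potential/momentum of $K$. The metric $h$ extends to an ALE manifold $\hat M \supset M$, and $M = \{f>0\}$ (or one side of $\{f=0\}$), with the conformal infinity of $g$ sitting at $f=0$. We will use the Calderbank--Pedersen local form. On the interior of the orbit space, the quotient is a domain in the hyperbolic half-plane $\{(\rho,\eta):\rho>0\}$, and both $g$ and $h$ are determined by a single eigenfunction $F$ of the hyperbolic Laplacian, $\rho^2(F_{\rho\rho}+F_{\eta\eta}) = \tfrac34 F$. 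Meanwhile $h$ is determined by a harmonic axisymmetric potential in the Joyce/Calderbank--Singer description of toric scalar-flat K\"ahler metrics.

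\emph{Step 2: global orbit space of $\hat M$.} Since the torus acts on the ALE manifold $\hat M$ with no fixed points beyond those forced by the toric structure (no additional ones), the orbit space $\hat M/T^2$ is a closed half-plane. Its boundary $\rho=0$ is divided into finitely many rods $(-\infty,\eta_1),(\eta_1,\eta_2),\dots,(\eta_k,\infty)$, each labelled by a primitive isotropy vector $v_i\in\mathbb{Z}^2$, and adjacent vectors satisfy the smoothness (determinant $\pm1$) condition. ALE asymptotics fix the behaviour at $\rho^2+\eta^2\to\infty$. We would then show, using the standard uniqueness for the boundary value problem (a Liouville/maximum-principle argument for the axisymmetric harmonic functions determining $h$, as in the stationary axisymmetric GR setting), that $h$ is uniquely fixed by the rod data. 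It is therefore one of the Calderbank--Singer ALE scalar-flat K\"ahler metrics on the toric resolution of $\mathbb{C}^2/\Gamma$.

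\emph{Step 3: determine $F$.} The function $F$ is smooth and positive on the open half-plane. Near each rod it has the regularity behaviour forced by smoothness of $g$ (it behaves like $\rho^{1/2}$ times a smooth function, with jumps in the linear data exactly at the $\eta_i$). Its growth at infinity is fixed by the ALE condition together with $g$ being asymptotically hyperbolic. Given these, we take the difference of $F$ with the CPS multipole
\[
\sum_i \frac{a_i\rho + b_i(\eta-\eta_i)}{\sqrt{\rho}\,\sqrt{\rho^2+(\eta-\eta_i)^2}} + \frac{a_0\rho+b_0\eta}{\sqrt\rho},
\]
with coefficients $a_i,b_i$ read off from the rod vectors $v_i$. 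This difference is an eigenfunction with no singular boundary data and subexponential growth. A maximum-principle argument for $\Delta_{\mathbb H^2}-\tfrac34$, whose eigenvalue sign gives coercivity, then forces it to vanish. Hence $g$ is the CPS multipole metric.

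\emph{Main obstacle.} We expect the hardest step to be Step 3's boundary analysis: extracting precise asymptotics of $F$ at the rod endpoints $\eta_i$ and at infinity, and verifying that the conformal boundary $\{f=0\}$ lies where required, so that the difference function genuinely satisfies hypotheses under which the maximum principle applies. We would first try to derive these asymptotics directly from the known form of $h$ in Step 2, since $f$ is a Killing potential for $h$ and hence is a linear combination of the moment maps.
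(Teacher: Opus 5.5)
\textbf{There is a genuine gap.} The step that actually identifies the instanton is missing, and the hypothesis that drives it is misread.

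\textbf{The hypothesis.} ``No additional fixed points in the conformal ALE manifold'' does not mean that $\hat M/T^2$ is a closed half-plane. That already follows from toric ALE alone, since Weyl--Papapetrou coordinates are global. It means that every corner $\zeta=z_i$ of the rod structure of $\hat M$ lies in $M=\{z>0\}$. Writing $V=f(\zeta)\log\rho^2+g(\zeta)+O(\rho^2)$ near the axis, this says $f_i=f(z_i)>0$.

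In the paper this is what drives everything once the multipole form is known. Positivity of $W$ near each corner forces $f_ia_i<0$, hence $a_i<0$, concavity of $f$, $A>0$ and $W>0$. This in turn forces $\epsilon_j=1$ in $v_{j-1}+\epsilon_jv_{j+1}=l_jv_j$, and then $l_j>2$. You never use the hypothesis this way, so the sign information that selects the Calderbank--Singer family among all multipoles never enters your argument.

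\textbf{The parameter-fixing step.} Your Step 3 says the coefficients are ``read off from the rod vectors'' and leaves the positions $\eta_i$ unconstrained. Proving that the weights \emph{and} the positions are dictated by integer data is exactly the content of the paper's axis analysis:
\begin{itemize}
\item normalise the rod vectors by the no-conical-singularity condition, $v_i=f_i'\partial_y+\ell^2c_i\partial_\psi$;
\item reduce smoothness at the corners to $f'_{j-1}+f'_{j+1}=l_jf'_j$ and $z_{j+1}-z_j=f_j(l_j-2)/(f'_j-f'_{j+1})$;
\item pass to the Killing basis $(v_{n-1},v_n)$, where $\tilde a_i=\tfrac12(m_i-m_{i-1})$ and $\tilde z_i=(n_i-n_{i-1})/(m_{i-1}-m_i)$ are fixed by the $l_i$.
\end{itemize}
Without this you get at best ``$g$ is some smooth CP multipole''. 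That does not show it is a Calderbank--Singer instanton: they showed their parameter choices suffice for smoothness, not that they are forced.

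\textbf{The comparison function in Step 3 is wrong.} It is not a CP multipole and does not solve $\mathcal{F}_{\rho\rho}+\mathcal{F}_{\eta\eta}=\tfrac{3}{4\rho^2}\mathcal{F}$.
\begin{itemize}
\item $\sqrt\rho$ and its hyperbolic-isometric images $\sqrt\rho/\sqrt{\rho^2+(\eta-\eta_i)^2}$ solve it with $\tfrac34$ replaced by $-\tfrac14$.
\item $(\eta-\eta_i)/(\sqrt\rho\sqrt{\rho^2+(\eta-\eta_i)^2})$ is a dipole. Its boundary value $\lim_{\rho\to0}\sqrt\rho\,\mathcal{F}$ jumps at $\eta_i$, which is incompatible with $z=\sqrt\rho\,\mathcal{F}$ being smooth.
\end{itemize}
Your building blocks $(\rho,\eta-\eta_i)/\sqrt{\rho^2+(\eta-\eta_i)^2}$ are the gradient of $\sqrt{\rho^2+(\eta-\eta_i)^2}$, so they belong to $\nabla z$, not to $\mathcal{F}$. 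The correct ansatz is $\mathcal{F}=A\rho^{-1/2}+\sum_ia_i\sqrt{\rho^2+(\eta-\eta_i)^2}/\sqrt\rho$, i.e.\ $z=A+\sum_ia_i\sqrt{\rho^2+(\zeta-z_i)^2}$. Its axis data is continuous and piecewise linear.

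\textbf{Step 2 is unsupported and beside the point.}
\begin{itemize}
\item A general toric SFK metric is not governed by one axisymmetric harmonic function. Besides $V$ one needs an independent solution $W$ of the monopole equation; only the Einstein relation $W=\ell^2(1-\tfrac12 zu_z)$ ties $W$ to $V$.
\item Rod vectors alone do not fix $h$: with more than one finite rod, the rod lengths (the K\"ahler class) vary.
\item Once $h$ and $\xi$ were known, $\mathcal{F}=z/\sqrt\rho$ would already be determined, so Step 3 would be redundant. Meanwhile the real constraint, that $z^{-2}h$ be Einstein, is never imposed.
\end{itemize}

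\textbf{The route that works.} Impose the Einstein condition from the start, so that everything is governed by a single harmonic $V$ on $\mathbb{R}^3$.
\begin{itemize}
\item The axis behaviour gives $V=A\log\rho^2+\tfrac12(f_0'+f_n')\zeta\log\rho^2+\sum_ia_iV_0(\rho,\zeta-z_i)+U$ with $U$ smooth.
\item Matching to the ALE asymptotics $V=-V_0+\tilde V$ forces $f_0'=-f_n'=1$ and $\sum_ia_i=-1$.
\item Then $R\partial_RU$ is harmonic and $O(\log R)$, hence constant, so $U$ is constant.
\end{itemize}
This is the correct form of your maximum-principle idea. Working on $\mathbb{R}^3$ turns axis regularity into plain smoothness of $U$. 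It avoids the uniform control at the ideal boundary of the hyperbolic half-plane that you flagged as the main obstacle.
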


The proof relies on an alternative local form  for toric self-dual Einstein metrics due to Tod, which is specified by an axisymmetric harmonic function on Euclidean space $\mathbb{R}^3$~\cite{Tod3, Tod2}. This  form relies on the fact that a self-dual Einstein metric with a Killing field is determined by a solution to the $SU(\infty)$ Toda equation, which in the presence of a second commuting Killing field can be linearised via the Ward transformation to the Laplace equation for an axisymmetric function on Euclidean space.  The Tod form of the metric is in fact closely related to the CP metric, where the eigenfunction on the hyperbolic plane is a derivative of the axisymmetric harmonic function on Euclidean space. However, we find the Tod form of the metric more convenient for a global analysis since the problem reduces to elementary harmonic analysis on Euclidean space. In particular, we find that regularity at the axis and ALE asymptotics are sufficient to show that the solution must be a CP multipole solution. Then, a more detailed analysis at the axis ensuring the absence of conical singularities shows that if the torus action has no fixed points in the ALE extension, the only smooth solutions are precisely given by the infinite class of instantons written down by Calderbank and Singer.

It is worth noting that the Tod form of the metric is similar to the form for Ricci-flat toric Hermitian metrics, which Tod also showed can be written in terms of an axisymmetric harmonic function on Euclidean space~\cite{Tod3}.  Indeed, this form of the metric was central to the classification of toric ALF Hermitian instantons due to Biquard and Gauduchon~\cite{Biquard:2021gwj}, who exploited toric K\"ahler geometry and  harmonic analysis on Euclidean space to perform a global analysis. Subsequently, this was generalised by two of the authors to prove uniqueness of toric ALE Hermitian instantons~\cite{Araneda:2025uqo}. The latter work also simplified the analysis by avoiding the use of toric K\"ahler geometry and working directly with the Tod form of the metric. Indeed, these classifications are aided by the fact that the Tod form of the metric is automatically in the Weyl-Papapetrou coordinates that are familiar for stationary and axisymmetric spacetimes, so the boundary conditions at the axis can be implemented in terms of the {\it rod structure} which describes how the toric symmetry degenerates at the axis.  Similarly, for self-dual Einstein instantons in  Tod form, the conformally related toric scalar-flat K\"ahler metric is in Weyl-Papapetrou coordinates and hence a global analysis can be performed in an analogous way to the Ricci-flat Hermitian instantons.

It is worth noting that self-dual negative Einstein instantons are also of interest in the context of the AdS/CFT correspondence, see e.g.~\cite{Martelli:2012sz, Farquet:2014kma}. In particular, Dunajski,  Gutowski, Tod and Sabra showed that  supersymmetric solutions to four-dimensional minimal gauged Euclidean supergravity with (anti-)self-dual Maxwell field are precisely the self-dual Einstein metrics with a single Killing field~\cite{DGST1}. Therefore, our theorem gives a global classification of supersymmetric gravitational instantons in this theory, under an additional assumption of a second commuting Killing field. In fact, certain geometric and physical quantities for such toric instantons have been computed using localisation and fixed point theorems, that avoid the need for the explicit form of the metric, although assume the existence of the corresponding instantons~\cite{BenettiGenolini:2024hyd}. In contrast, our work gives the explicit form for such instantons in minimal gauged Euclidean supergravity and thus addresses the existence problem in this context. Interestingly, the Lorentzian version of this theory, namely minimal gauged supergravity, admits supersymmetric black holes such as the supersymmetric limit of the Kerr-Newman-AdS black hole~\cite{Kostelecky:1995ei, Caldarelli:1998hg}. As mentioned above, the classification of AdS black holes remains a notable open problem since the methods and integrable structures exploited in the no-hair theorems are no longer available. This is even the case for supersymmetric AdS black holes, since the general local form for a supersymmetric solution to minimal gauged supergravity reduces to a set of non-linear PDEs that do not appear integrable~\cite{Caldarelli:2003pb, Cacciatori:2004rt}. Indeed, this is similar to the general local form of supersymmetric solutions to minimal gauged Euclidean supergravity~\cite{DGST2}, except for the aforementioned special case of a (anti-)self-dual Maxwell field which reduces to the integrable self-dual Einstein case.  Therefore, our work can also be viewed as a Euclidean version of a classification of supersymmetric AdS black holes.

The organisation of this paper is as follows. In Section \ref{sec:local} we review the local form of toric self-dual Einstein metrics, including both the Tod form and the CP form, and the example of multipole solutions. In Section \ref{sec:asymptotics} we introduce the class of conformally ALE self-dual Einstein manifolds and deduce the corresponding asymptotic boundary conditions in the Tod form of the metric. Finally, in Section \ref{sec:globalAnalysis} we perform a global analysis of the Tod form of the metric, including a detailed regularity analysis of the fixed point sets of torus Killing fields, and prove Theorem \ref{thm:main}.

\subsection*{Acknowledgements}
BA is supported via the ERC Consolidator/UKRI Frontier grant TwistorQFT EP/Z000157/1. VR is supported by a School of Mathematics Studentship from the University of Edinburgh.

\section{Local form of toric self-dual Einstein metrics}\label{sec:local}

\subsection{Preliminaries}
Let $(M,g)$ be a four-dimensional, orientable, Riemannian manifold. The Weyl tensor of $g$, denoted $\mathcal{W}$, splits under the Hodge star operator $\star_{g}$ as $\mathcal{W}=\mathcal{W}^{+}+\mathcal{W}^{-}$, where $\mathcal{W}^{\pm}=\frac{1}{2}(\mathcal{W}\pm\star_{g}\mathcal{W})$ is called the self-dual/anti-self-dual Weyl tensor. In abstract indices, $\star_{g}\mathcal{W}_{abcd}=\frac{1}{2}\varepsilon_{ab}{}^{mn}\mathcal{W}_{mncd}$, where $\varepsilon_{abcd}$ is the volume 4-form and indices are raised and lowered with $g$ and its inverse. We say that $(M,g)$ is (anti)-self-dual if $\mathcal{W}=\pm\star_{g}\mathcal{W}$ and Einstein if \eqref{eq:Ein} is satisfied.
We say that $(M,g)$ is a self-dual Einstein manifold if it is both (anti)-self-dual and Einstein. We choose the convention that $(M,g)$ is self-dual if $\mathcal{W}^+=0$ (so in fact $\mathcal{W}$ is anti-self-dual). We shall be interested in $\lambda<0$ and hence we will set
\begin{equation}
    \lambda=-\frac{3}{\ell^2}  \; ,  \label{eq:lambda}
\end{equation}
where $\ell$ is a real non-zero constant, although the local results in this section also hold for $\lambda>0$ with the appropriate replacements.

From the work of Przanowski \cite{Przanowski} and Tod \cite{Tod1, Tod2}, we know that, if $(M,g)$ is self-dual Einstein and has a Killing vector  $\xi$, then it can be described in terms of the $SU(\infty)$ Toda formulation, as follows.  
Define a 2-form $Z$ on $M$ and a scalar field $z$ on the region $|Z|_g \neq 0$ by 
\begin{align}\label{zToda}
Z = \frac{1}{2}(\td\xi^{\flat}+\star_{g}\td\xi^{\flat}), 
\qquad 
z = \frac{2}{|Z|_{g}} ,
\end{align}
where $\xi^{\flat}=g(\xi,\cdot)$.
Then, it can be shown that $\mathcal{W}^+=0$ implies that $Z$ is a conformal Killing Yano tensor (CKY) and that $J^{a}{}_{b}=-z Z^{a}{}_{b}$ is an integrable almost-complex structure compatible with $g$. Thus $(M,g, J)$ is locally Hermitian with fundamental form $\omega_{ab}= J^{c}_{~a} g_{cb} =  z Z_{ab}$, which also satisfies $\iota_\xi \omega = \td z^{-1}$ and hence both $z$ and $J$ are also invariant under $\xi$. Furthermore, $\hat{g}= z^2 g$ is a scalar-flat K\"ahler metric with K\"ahler form $\hat\omega= z^2 \omega$, i.e., the complex structure $J$ is parallel with respect to the Levi-Civita connection of $\hat{g}$, such that $\iota_\xi \hat\omega= -\td z$ so the Killing field $\xi$ is also Hamiltonian.

One can introduce a coordinate system $(\psi,x,y,z)$ adapted to this structure such that away from any fixed points of the Killing field $\xi= \partial_\psi$ the metric on the region $z>0$ takes the local form
\begin{align}\label{Todametric}
g = \frac{1}{z^2} \left[ W^{-1}(\td\psi+A)^2 + W\left(\td{z}^2+e^{u}(\td{x}^2+\td{y}^2)\right) \right],
\end{align}
where $W$, $u$ are functions and $A$ a 1-form on the space of orbits of $\xi$ with coordinates $(x,y,z)$, satisfying the following system:
\begin{subequations}\label{Todasystem}
\begin{align}
0 ={}& u_{xx}+u_{yy}+(e^{u})_{zz}, \label{Todaeq} \\
\td A ={}& W_{x}\td{y}\wedge\td{z} + W_{y}\td{z}\wedge\td{x} + (We^{u})_{z}\td{x}\wedge\td{y}, \label{monopoleeq} \\
W ={}& \ell^2 \left(1-\frac{zu_{z}}{2} \right). \label{W}
\end{align}
\end{subequations}
Equation \eqref{Todaeq} is called the Toda equation. The K\"ahler metric in these coordinates is 
\begin{align}\label{SFKmetric}
\hat{g} := W^{-1}(\td\psi+A)^2 + W\left(\td{z}^2+e^{u}(\td{x}^2+\td{y}^2)\right).
\end{align}
 Observe that $\hat{g}$ belongs to the LeBrun's class of scalar-flat K\"ahler metrics with symmetry \cite{LeBrun91}, as can be seen from \eqref{SFKmetric}, \eqref{Todaeq} and \eqref{monopoleeq}.

\begin{remark}
    It is worth noting that the above local coordinate structure has the scaling freedom
\begin{equation}
(\psi, z, x, y, e^u) \mapsto ( \alpha \psi, \alpha z, x, y, \alpha^2 e^u) \; , \label{eq:scaling}
\end{equation}
where $\alpha>0$ is a constant, which leaves $g, \omega$ invariant.
\end{remark}

\subsection{Toric symmetry}

A Riemannian manifold $(M,g)$ is toric if there is an (effective) action of the torus $T^2=S^1\times S^1$ on $M$ that preserves $g$. Henceforth we will assume that $(M, g)$ is toric and self-dual Einstein, such that $\xi$ lies in the span of the torus Killing fields. Therefore, in the above we now assume that there is a second Killing field $\eta$ that commutes with $\xi$.

A local description of all self-dual Einstein metrics with two commuting isometries was given by Calderbank and Pedersen \cite{Calderbank:2001uz}, in terms of an eigenfunction of the Laplacian on the hyperbolic plane (see Remark \ref{remark:CP}).
An alternative description  was given by Tod \cite{Tod3}, which reduces the problem to the axisymmetric Laplace equation on $\mathbb{R}^3$, and is more convenient for our purposes. 
In terms of the system \eqref{Todametric}-\eqref{Todasystem}, Tod showed that the second Killing vector can be always chosen to be $\eta=\partial_{y}$, so the Toda equation \eqref{Todaeq} reduces to 
\begin{align}\label{Todareduced}
u_{xx}+(e^{u})_{zz}=0,
\end{align}
and the 1-form $A$ is $A=F\td{y}$ for some function $F=F(x,z)$. The equation \eqref{Todareduced} implies that locally there is a function $\zeta(x,z)$ such that $u_{x}=-2\zeta_{z}$ and $(e^{u})_{z}=2\zeta_{x}$. Define the function $\rho 
> 0$ by $\rho^2:=e^{u}$. Now consider the coordinate change $(x,z)\to(\rho,\zeta)$. By inspecting the inverse of the Jacobian matrix one deduces that $z_{\rho}=\rho x_{\zeta}$ and $z_{\zeta}=-\rho x_{\rho}$: the second equation implies that there is a function $V=V(\rho,\zeta)$ defined by 
\begin{align}\label{definitionV}
z = \tfrac{1}{2}\rho V_{\rho}, \qquad x = -\tfrac{1}{2}V_{\zeta}, 
\end{align}
whereas from the first equation we see that $V$ solves 
\begin{align}\label{Laplace}
\rho^{-1}(\rho V_{\rho})_{\rho} + V_{\zeta\zeta} = 0  \; .
\end{align}
Therefore $V$ is an axisymmetric harmonic function on Euclidean space with cylidrical cooordinates $(\rho, \zeta)$.
Changing coordinates in \eqref{Todametric}-\eqref{Todasystem} to $(\psi, y,\rho,\zeta)$ we then arrive at the following crucial result.

\begin{proposition}[Tod \cite{Tod3}]\label{prop:Tod}
Let $(M,g)$ be a self-dual Einstein manifold with two commuting Killing vector fields. Then the metric can be written in the local form 
\begin{align}\label{Todmetric}
g = \frac{1}{z^2}\left[W^{-1}(\td\psi+F\td{y})^2+W\rho^2\td{y}^2+e^{2\nu}(\td\rho^2+\td\zeta^2) \right],
\end{align}
where the Killing fields are $\xi=\partial_{\psi}$ and $\eta= \partial_{y}$, the functions $z,W,F,\nu$ are given by
\begin{equation}\label{functionsTodmetric}
\begin{aligned}
 z ={}&  \tfrac{1}{2}\rho V_{\rho}, \\
 W ={}&\ell^2\left(1+\frac{V_{\rho}V_{\zeta\zeta}}{\rho(V_{\zeta\zeta}^2+V_{\rho\zeta}^2)} \right), \\
 F ={}& \ell^2 \left(\zeta - \frac{V_{\rho}V_{\rho\zeta}}{V_{\zeta\zeta}^2+V_{\rho\zeta}^2} \right), \\
 e^{2\nu} ={}& \frac{W\rho^2}{4}(V_{\zeta\zeta}^2+V_{\rho\zeta}^2),
\end{aligned}
\end{equation}
and $V$ is a solution to the axisymmetric Laplace equation \eqref{Laplace} on $\mathbb{R}^{3}$.
\end{proposition}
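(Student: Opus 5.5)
Starting from the Toda form \eqref{Todametric}--\eqref{Todasystem} with $\xi=\partial_\psi$, the plan is to impose the second Killing field, reduce to the two-variable Toda equation \eqref{Todareduced}, and apply the hodograph (Ward) transformation already outlined above. Tod's result guarantees that $\eta$ can be taken to be $\partial_y$. The reason is that the orbit-space metric $\td z^2+e^u(\td x^2+\td y^2)$ together with $z$ is canonically determined by $\xi$, and $\eta$ commutes with $\xi$, so $\eta$ descends to an isometry of the three-dimensional Toda structure preserving $z$. It therefore acts as a conformal isometry of the flat $(x,y)$ planes, and after a holomorphic change of $x+iy$ (which shifts $u$ by a harmonic function and preserves the Toda equation) we may take it to be a translation $\partial_y$, up to adding a multiple of $\xi$. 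Consequently $u=u(x,z)$, $W=W(x,z)$ by \eqref{W}, and \eqref{monopoleeq} lets us choose the gauge $A=F\,\td y$ with $F=F(x,z)$ satisfying $F_z=W_x$ and $F_x=-(We^u)_z$.

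Next I carry out the Legendre-type change of variables. From \eqref{Todareduced}, the closed 1-form $-\tfrac12 u_x\,\td z+\tfrac12(e^u)_z\,\td x$ defines $\zeta$. With $\rho^2=e^u$, I invert the Jacobian of $(x,z)\mapsto(\rho,\zeta)$ to get $z_\rho=\rho x_\zeta$ and $z_\zeta=-\rho x_\rho$. The second relation makes $x\,\td\rho\cdot\rho -\ldots$ integrable and gives the potential $V$ in \eqref{definitionV}; the first then yields \eqref{Laplace}. I will then express all remaining quantities in terms of $V$. Differentiating \eqref{definitionV} gives $x_\rho=-\tfrac12V_{\rho\zeta}$, $x_\zeta=-\tfrac12V_{\zeta\zeta}$, and $z_\rho,z_\zeta$ likewise, using \eqref{Laplace} to write $(\rho V_\rho)_\rho=-\rho V_{\zeta\zeta}$. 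Hence the Jacobian determinant is $\tfrac{\rho}{4}(V_{\zeta\zeta}^2+V_{\rho\zeta}^2)$.

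The flat part becomes $\td z^2+\rho^2\td x^2=\tfrac{\rho^2}{4}(V_{\zeta\zeta}^2+V_{\rho\zeta}^2)(\td\rho^2+\td\zeta^2)$. This follows because $(z,\rho x)$-type derivatives satisfy Cauchy–Riemann-like relations, so the map is conformal in the metric $\td z^2+\rho^2\td x^2$. This gives $e^{2\nu}$. For $W$ I need $u_z=2\rho_z/\rho$. Computing $\rho_z$ from the inverse Jacobian gives $\rho_z=x_\zeta/\det=-\tfrac12 V_{\zeta\zeta}/\det$, and substituting into \eqref{W} produces the stated formula $W=\ell^2(1+V_\rho V_{\zeta\zeta}/(\rho(V_{\zeta\zeta}^2+V_{\rho\zeta}^2)))$ after simplifying $z u_z$. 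Finally, I integrate the equations for $F$ in the new variables. Transforming $F_z=W_x$, $F_x=-(W\rho^2)_z$ via the chain rule and checking that the candidate $F=\ell^2(\zeta-V_\rho V_{\rho\zeta}/(V_{\zeta\zeta}^2+V_{\rho\zeta}^2))$ satisfies them reduces, after using \eqref{Laplace} and its derivatives, to an identity; the integration constant is absorbed by $\psi\mapsto\psi+c\,y$.

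The main obstacle is the very first step, justifying that $\eta$ can be straightened to $\partial_y$ while keeping the Toda form. This includes the possibility that $\eta$ acts by rotations or dilations in the $(x,y)$ plane and must be conjugated to a translation, and it requires showing that $\eta$ preserves $z$, which holds because $Z$ is built naturally from $\xi$ and $[\xi,\eta]=0$. The second delicate point is the algebraic verification of the $F$ formula, which I would do by direct substitution rather than by integrating.
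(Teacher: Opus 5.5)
Your proposal follows the paper's route: take $\eta=\partial_y$, then apply the hodograph change \eqref{definitionV} and substitute. Your argument for the first step (via the induced isometry of $\td z^2+e^u(\td x^2+\td y^2)$ fixing $z$, a holomorphic change of $x+iy$, and a gauge shift of $\psi$) is a reasonable sketch of what the paper simply attributes to Tod, and your computations of $e^{2\nu}$ and $W$ are correct.

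The one slip is the sign of the $F$ equations. With $A=F\,\td y$, \eqref{monopoleeq} gives $F_z=-W_x$ and $F_x=(We^u)_z$, not the signs you wrote. With your signs the verification would fail by an overall sign. With the correct ones it goes through at once, since $F=\ell^2(\zeta-z\zeta_z)$ (derivatives at fixed $x$) solves them and $z\zeta_z=V_\rho V_{\rho\zeta}/(V_{\zeta\zeta}^2+V_{\rho\zeta}^2)$.
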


\begin{remark}\label{rem:scaling2}
    The above local metric has the following scaling freedom
    \begin{equation}
        (V, \rho, \zeta, \psi, y) \to ( \alpha V, \alpha^{-1} \rho, \alpha^{-1} \zeta, \alpha \psi, \alpha^2 y)
    \end{equation}
    where $\alpha>0$ is a constant, which leaves $g$ invariant.
\end{remark}

The coordinate system $(\psi, y,\rho,\zeta)$ is adapted to the toric symmetry and gives rise to a geometrically defined coordinate system on the orbit space $\mathcal{O}=M/T^2$, as follows. If $G$ denotes the Gram matrix of the Killing fields $(\partial_\psi, \partial_y)$, then it is easily verified that 
\begin{align}\label{determinant}
\det G=\frac{\rho^2}{z^4}.
\end{align}
The metric induced on the orbit space is $q= z^{-2} e^{2\nu}(\td \rho^2+ \td \zeta^2)$ and hence $\rho$ is harmonic on $(\mathcal{O}, q)$, while $\zeta$ is its harmonic conjugate defined by $\td \zeta = - \star_q \td \rho$ (choosing an orientation).  In fact, with respect to the K\"ahler metric $\hat{g}$, the determinant of the Gram matrix and orbit space metric are
\begin{align}
    \det \hat G = \rho^2, \qquad \hat{q}= e^{2\nu}(\td \rho^2+ \td \zeta^2)  \label{eq:WPcoords}
\end{align}
and therefore we see that this coordinate system precisely corresponds to the Weyl-Papapetrou coordinates familiar in GR. Indeed, any scalar-flat K\"ahler metric corresponds to an Einstein-Maxwell solution~\cite{Flaherty} and hence toric solutions in this class admit Weyl-Papapetrou coordinates defined by \eqref{eq:WPcoords}.

\begin{remark}\label{remark:CP}
Calderbank-Pedersen showed by different methods that any toric self-dual Einstein structure  can be written in the local form (setting $\ell=1$)~\cite{Calderbank:2001uz} 
\begin{align}\label{CPmetric}
g = \frac{\rho([z_{\zeta}\td\psi+(\rho z_{\rho} + \zeta z_{\zeta}-z)\td y]^2+[z_{\rho}\td\psi+(\zeta z_{\rho} - \rho z_{\zeta})\td y]^2)}{z^2[\rho(z_{\rho}^2+z_{\zeta}^2)-zz_{\rho}]} 
+ \frac{\rho [\rho(z_{\rho}^2+z_{\zeta}^2)-zz_{\rho}]}{z^2} h
\end{align}
where $\partial_\psi, \partial_y$ are the torus Killing fields,  $\rho>0$,  $h= (\td \rho^2+ \td \zeta^2)/\rho^2$ is the metric on the hyperbolic half-plane, 
\begin{align}\label{zCP}
z(\rho,\zeta) = \sqrt{\rho}\mathcal{F}(\rho,\zeta) 
\end{align}
and $\mathcal{F}=\mathcal{F}(\rho,\zeta)$ is a positive function satisfying 
\begin{align}\label{eigenfunction}
\mathcal{F}_{\rho\rho}+\mathcal{F}_{\zeta\zeta} = \frac{3}{4\rho^2}\mathcal{F} \; .
\end{align}
Thus $\mathcal{F}$ is an eigenfunction of the Laplacian on the hyperbolic half-plane with eigenvalue $3/4$. Using \eqref{definitionV}-\eqref{Laplace}, one can check that the metrics \eqref{CPmetric} and \eqref{Todmetric}-\eqref{functionsTodmetric} are indeed the same. In particular, note that the relation between the axisymmetric harmonic function $V$ and the eigenfunction on hyperbolic space $\mathcal{F}$ is given by 
\begin{equation}\label{eq:FV}
    \mathcal{F}= \tfrac{1}{2} \sqrt{\rho} V_\rho  \; .
\end{equation}
Therefore, fixing $\mathcal{F}$ fixes $V$ up to an affine function of $\zeta$ (using harmonicity), which is precisely the redundancy in the Tod form of the metric.
\end{remark}

\begin{example}[Hyperbolic space]\label{ex:hyperbolic}
In the  Poincar\'e ball model of hyperbolic space $M$ is a unit open ball in $\mathbb{R}^4$ with metric
\begin{equation}
g = \frac{4}{(1-r^2)^2} \left( \td r^2 +  \frac{r^2}{4} \left( (\td \psi+ \cos\theta \td \phi)^2+ \td \theta^2+ \sin^2\theta \td \phi^2 \right) \right) \; , \label{H4}
\end{equation}
where $r<1$, the unit metric on $S^3$ is written in Euler angles $(\theta, \psi, \phi)$, and we have fixed $\ell=1$.  Choosing the Killing vector $\xi= \partial_\psi$ we can compute the self-dual two-form $Z$ defined in \eqref{zToda} (fixing orientation $\epsilon_{\psi r \theta \phi}>0$) and  deduce that the coordinate change to the Toda form of the metric \eqref{Todametric} is\begin{equation}
z= 1-r^2, \qquad \tanh x = \cos \theta , \qquad y=\phi \; ,
\end{equation}
where 
\begin{equation}
W= \frac{1}{1-z}, \qquad e^u = (1-z)^2\sech^2 x , \qquad A=  \tanh x \, \td y \; .
\end{equation}
Note the coordinate ranges are $0<z\leq 1$ and $x\in \mathbb{R}$. In particular, the conformal boundary of hyperbolic space is a unit $S^3$ at $r=1$ and is given by $z=0$. Furthermore, note that the conformal K\"ahler metric $\hat{g}= z^2 g$ is that of Euclidean space.

We can also deduce that Weyl-Papapetrou coordinates in the Tod form of the metric \eqref{Todmetric} are 
\begin{equation}
\rho = (1-z)\sech x, \qquad \zeta = (1-z) \tanh x
\end{equation}
and hence  $\rho^2+ \zeta^2 = (1-z)^2<1$. 
Thus, hyperbolic space corresponds to the interior of the unit semi-circle centred at the origin of the $(\rho, \zeta)$-plane, with the conformal boundary $z=0$ corresponding to the semi-circle $\rho^2+\zeta^2=1$. Inverting the coordinate change gives
\begin{equation}\label{eq:zH}
z = 1- \sqrt{\rho^2+\zeta^2}, \qquad x = \sinh^{-1} \left( \frac{\zeta}{\rho}\right)  \; .
\end{equation}
\end{example}

\begin{example}[Self-dual negative Einstein Taub-bolt]\label{ex:TB}
The Taub-NUT solution with $\lambda=-3/\ell^2$ and anti-self-dual Weyl tensor is
\begin{equation}
g = \frac{\Delta}{r^2-n^2} ( \td \tau+ 2n \cos \theta \td \phi)^2+ \frac{r^2-n^2}{\Delta} \td r^2 + (r^2-n^2) (\td \theta^2+\sin^2\theta \td \phi^2)
\end{equation}
where $n$ is a constant,
\begin{equation}
\Delta = (r + n)^2 \left( 1- \frac{4n^2}{\ell^2} + \frac{1}{\ell^2} (r -  n)^2 \right)
\end{equation}
and the orientation is $\epsilon_{\tau r \theta \phi}>0$. 
The roots of $\Delta$ are: $-n$ (double) and $r_\pm = n \pm \sqrt{4n^2 -\ell^2}$. If $n>\ell/2$ then $r_+>n$ is the largest real root and $g$ is smooth for $r>r_+$ and we can add a bolt at $r=r_+$ smoothly provided $\tau\sim \tau +8\pi n/p$ and 
\begin{equation}\label{nTB}
    n= \frac{\ell p}{4\sqrt{p-1}}  \; ,
\end{equation}
where $p\geq 3$ is an integer. This gives a complete metric on $M=\mathcal{O}(-p)$ (complex line bundle over $S^2$ with Chern number $p$). The constant $r$ surfaces are lens spaces $L(p,1)$ and there is a conformal boundary as $r\to \infty$. On the other hand, if $n<0$, then $g$ extends to a smooth Einstein metric on $\mathbb{R}^4$ with a NUT at $r=-n$ which generalises the Euclidean Taub-NUT solution.

The Toda data with respect to $\xi= \partial_\tau$ is
\begin{align}
&z= \frac{1}{r+ n}, \qquad  \tanh x = \cos\theta, \qquad y=\phi, \qquad A= 2n \tanh x\td y \\
& W = \frac{(1-2n z)\ell^2}{1- 4n z +\ell^2 z^2} , \qquad e^u = \frac{1}{\ell^2} (1- 4n z+ \ell^2 z^2) \text{sech}^2 x  \; ,
\end{align}
where $z=0$ is the conformal boundary.  If $n<0$, the above $z$ is singular at the NUT and hence $g$ is not globally conformally K\"ahler with respect to this structure. However, if $n>\ell/2$, then $0<z\leq 1/(r_++n)$ so this solution is globally conformally K\"ahler with respect to $\xi$, so we will focus on this solution.

The Weyl-Papapetrou coordinates are
\[
\rho = \frac{1}{\ell} \sqrt{1- 4n z+ \ell^2 z^2} \; \text{sech} x, \qquad \zeta = \left( z- \frac{2n}{\ell^2}\right) \tanh x,
\]
the conformal boundary $z=0$ corresponds to the semi-ellipse
\[
\rho^2 + \frac{\ell^2}{4 n^2} \zeta^2 = \frac{1}{\ell^2}
\]
and the domain corresponds to the interior of the semi-ellipse in the $(\rho, \zeta)$-plane.  We can invert this to obtain
\begin{equation}\label{eq:zTB}
z  =  \frac{2n}{\ell^2} -   \frac{1}{2} \sqrt{ \rho^2+ (\zeta- \sqrt{c})^2} -    \frac{1}{2} \sqrt{ \rho^2+ (\zeta+\sqrt{c})^2}
\end{equation}
where $c:= \ell^{-4}( 4n^2-\ell^2)>0$.
\end{example}

\begin{example}[Multipole solutions]\label{ex:multi} Calderbank and Pedersen introduce the $(n+1)$-pole solutions~\cite{Calderbank:2001uz}  
\begin{equation}
    \mathcal{F}= \frac{A}{\sqrt{\rho}}+ \sum_{i=1}^{n} a_i \mathcal{F}_0( \rho, \zeta-z_i)  \label{eq:multiF}
\end{equation}
where 
\begin{equation}
\mathcal{F}_0(\rho, \zeta):= \frac{\sqrt{\rho^2 +\zeta^2}}{\sqrt{\rho}}
\end{equation}
is a basic solution to \eqref{eigenfunction} and $A, a_i, z_i$ are constants. The corresponding axisymmetric-harmonic function is found from \eqref{eq:FV} (up to an affine function of $\zeta$) to be
\begin{equation}
    V=A \log \rho^2+  \sum_{i=1}^{n} a_i V_0( \rho, \zeta-z_i)
\end{equation}
where 
\begin{equation}
V_0( \rho, \zeta) := 2 \sqrt{\rho^2+\zeta^2} - \zeta \log \left( \frac{\sqrt{\rho^2+\zeta^2} + \zeta }{\sqrt{\rho^2+\zeta^2} - \zeta } \right) \label{eq:V0}
\end{equation}
is the axisymmetric-harmonic function defined by the basic solution $\mathcal{F}_0$. For later purposes it is useful to note that
\begin{equation}\label{eq:V0axis}
V_0(\rho, \zeta)= |\zeta|\log \rho^2+O(1)
\end{equation}
as $\rho\to 0$.

By inspecting $z$ for hyperbolic space \eqref{eq:zH} and self-dual negative Einstein Taub-bolt \eqref{eq:zTB} and using \eqref{zCP}, we can immediately deduce that they are $2$-pole and $3$-pole solutions respectively. The $(n+1)$-pole solutions are a large local family of self-dual Einstein metrics \eqref{Todmetric}, \eqref{functionsTodmetric}, whose global analysis has yet to be fully performed. In this paper, we will show that the classification of a class of self-dual Einstein manifolds reduces to a global analysis of the multipole solutions.

\end{example}

\subsection{Change of Killing field}\label{subsec:changeKVF}

The Tod form for a toric self-dual Einstein structure given in Proposition \ref{prop:Tod} is not symmetric with respect to the toric Killing fields $\partial_\psi, \partial_y$.
However, the toric symmetry does not single out a preferred Killing vector: any linear combination 
\begin{equation}
    \tilde{\xi}=b_{1}\partial_{\psi}+b_2\partial_{y} \; ,
\end{equation} 
with $b_1, b_2\in \mathbb{R}$, defines an integrable complex structure via \eqref{zToda} and there will be corresponding descriptions in terms of new adapted coordinates $(\tilde\psi,\tilde{y},\tilde\rho,\tilde\zeta)$ where $\tilde\xi=\partial_{\tilde\psi}$ and a new scalar field $\tilde{z}$. 
In particular, note that this means that there is a two-dimensional family, parameterised by $b_1,b_2$, of scalar-flat K\"ahler metrics conformal to the Einstein metric $g$.

To relate the different descriptions consider a new basis of toric Killing fields $(\tilde{\xi}, \tilde \eta)$ and let $\tilde G$ denote its Gram matrix. Since the two bases are related by $(\partial_\psi, \partial_y) =(\tilde{\xi}, \tilde \eta)L$  for some $L\in GL(2, \mathbb{R})$, it follows that $\det G= a^2\det\tilde{G}$ where $a=|\det L| > 0$, so from \eqref{determinant} we deduce that $a \tilde\rho/\tilde{z}^2 = \rho/z^2$. Note that from \eqref{zCP} this implies that the eigenfunction $\tilde{\mathcal{F}}= \sqrt{a} \mathcal{F}$ and so is invariant under changes of Killing field up to an overall multiplicative constant. For simplicity we will set $\ell=1$ in the rest of this section.

Using \eqref{CPmetric}, a calculation shows that the scalar field $\tilde{z}$ associated to $\tilde\xi=b_{1}\partial_{\psi}+b_2\partial_{y}$ via \eqref{zToda} is 
\begin{align}\label{eq:ztilde}
\tilde{z} = \frac{z}{\sqrt{(b_1+b_2\zeta)^2+b_2^2\rho^2}}.
\end{align}
We can then find $\tilde\rho$ and its harmonic conjugate $\tilde\zeta$: a short computation gives
\begin{align}\label{tilderz}
\tilde\rho = \frac{\rho}{a[(b_1+b_2\zeta)^2+b_2^2\rho^2]}, 
\qquad 
\tilde\zeta = c - \frac{(b_1+b_2\zeta)}{a b_2[(b_1+b_2\zeta)^2+b_2^2\rho^2]},
\end{align}
where $c$ is a constant. Alternatively, in terms of the complex coordinate $w=\zeta+i\rho$ and $\tilde{w}=\tilde\zeta+i\tilde\rho$, \eqref{tilderz} is equivalent to 
\begin{align}
\tilde{w} = \frac{a_1+a_2 w}{b_1+b_2 w},
\label{eq:wTilde}
\end{align}
with $a_1 :=cb_1-a^{-1}b_2^{-1}$, $a_2:=cb_2$ and note $a_2 b_1- a_1 b_2 =a^{-1}>0$. Thus, a change of Killing field is equivalent to a $PGL(2,\mathbb{R})$ transformation in the upper-half $w$-plane (see also~\cite{Farquet:2014kma}).

\begin{example}[Multipole solutions under change of Killing field]\label{ex:multipolechange}
Consider the transformation of the multipole solutions in Example \ref{ex:multi}  under a change of preferred Killing field from $\xi$ to $\tilde{\xi}$ with $b_2\neq 0$ as above.  Then, from equation \eqref{tilderz} we find the basic solution transforms as
\begin{align}
    \mathcal{F}_0(\rho, \zeta - z_i) &= \sqrt{a}|b_1 + b_2z_i|\mathcal{F}_0(\tilde{\rho}, \tilde{\zeta} - \tilde{z}_i) \; ,
\end{align}
where 
\begin{align}
    \tilde{z}_i &= c - \frac{1}{ab_2(b_1 + b_2z_i)}
\end{align}
correspond to the $PGL(2, \mathbb{R})$ transformation of $z_i$.  Furthermore, taking the inverse transformation of equation \eqref{eq:wTilde} shows
\begin{align}
    \rho &= \frac{\tilde{\rho}}{ab_2^2((\tilde{\zeta} - c)^2 + \tilde{\rho}^2)}.
\end{align}
Therefore, recalling that $\tilde{\mathcal{F}}= \sqrt{a} \mathcal{F}$, we find that \eqref{eq:multiF} transforms to
\begin{align}
    \tilde{\mathcal{F}} = aA|b_2|\mathcal{F}_0(\tilde{\rho}, \tilde{\zeta} - c) + a\sum_{i = 1}^na_i|b_1 + b_2z_i|\mathcal{F}_0(\tilde{\rho}, \tilde{\zeta} - \tilde{z}_i).
    \label{eq:newMultipole}
\end{align}
We deduce that a multipole solution is transformed into another multipole solution by change of preferred Killing field.

As a special case, consider hyperbolic space. As remarked above, hyperbolic space is a 2-pole solution, which from \eqref{eq:zH} and \eqref{zCP} is given by 
\begin{equation}
    \mathcal{F} = \frac{1}{\sqrt{\rho}} - \mathcal{F}_0(\rho, \zeta) \; ,
\end{equation} 
i.e. $n = 1$, $A = 1$, $z_1 = 0$ and $a_1 = -1$. Suppose we perform a change of preferred Killing field with
\begin{align}
    L^{-1} &= \begin{bmatrix}
        1 & 1 \\
        -1 & 1
    \end{bmatrix} \,\,\,\mathrm{and}\,\,\, c = -1.
\end{align}
Then, $a = 1/2$, $b_1 = 1$, $b_2 = -1$, $\tilde{z}_1 = 1$ and therefore
\begin{align}
    \tilde{\mathcal{F}} &= \frac{1}{2}\left(\mathcal{F}_0(\tilde{\rho}, \tilde{\zeta} + 1) - \mathcal{F}_0(\tilde{\rho}, \tilde{\zeta} - 1)\right),
\end{align}
which is the form of hyperbolic space given in \cite{Calderbank:2002gy, Farquet:2014kma}. The specific choices for $L$ and $c$ made in this example are canonical in a sense explained in Section \ref{sec:globalAnalysis} - see equations \eqref{eq:aInverse} and \eqref{eq:c}.
\end{example}

\section{Asymptotic structures}\label{sec:asymptotics}

\subsection{Conformally compact manifolds}  

\begin{definition}[See e.g.~\cite{Anderson}]
A Riemannian manifold $(M,g)$ is conformally compact if:
\begin{itemize}
\item There is a compact manifold with boundary $\bar{M} = M \cup \partial M$ where $\partial M$ is compact.
\item There exists a smooth function $w$ on $\bar{M}$ such that (i) $w>0$ on $M$ (ii) $w=0$ and $\td w \neq 0$ on $\partial M$; (iii) $\bar{g}=w^2 g$ extends to a smooth metric on $\bar{M}$.  
\end{itemize}
The function $w$ is called a boundary defining function, and is defined only up to positive non-constant rescalings.
\end{definition} 

Now, suppose that $(M,g)$ is toric self-dual Einstein with negative cosmological constant \eqref{eq:lambda}. 
Recall that the geometry can be described in  terms of the Toda system \eqref{Todametric}-\eqref{Todasystem} where  $u_y=0$ (recall the toric Killing fields are $\xi=\partial_\psi$ and $\eta=\partial_y$). For analytic solutions to the Toda equation \eqref{Todaeq}, we can Taylor expand near $z=0$ as
\begin{equation}
\begin{aligned}
u(x,z) ={}& u^{(0)}(x) + z\, u^{(1)}(x) +O(z^2) \\
W(x,z) ={}& \ell^2\left(1-\tfrac{1}{2}z \, u^{(1)}(x) +O(z^2) \right) \\
F(x,z)={}& F^{(0)}(x)+ \tfrac{1}{2}\ell^2 u^{(1)}(x)z+ O(z^2) , \qquad \partial_x F^{(0)}= \tfrac{1}{2} \ell^2 e^{u{(0)}} u^{(1)} \; ,
\end{aligned}
\end{equation}
where to obtain the second and third equation we used \eqref{W} and \eqref{monopoleeq}. Thus, replacing into \eqref{Todametric}, we find 
\begin{align}
g = \frac{1}{z^2} \left[\ell^2 \td{z}^2+\frac{1}{\ell^2}(\td\psi+F^{(0)}\td y)^2+\ell^2 e^{u^{(0)}}(\td{x}^2+\td{y}^2) \right] + O(z^{-1}),
\end{align}
and hence we deduce that the K\"ahler metric $\hat g = z^2 g$
extends smoothly to $z=0$ and to at least small $z<0$. We deduce that $(M,g)$ is conformally compact, with a boundary defining function $z$ and a conformal boundary $\{z=0\}$ (this was also shown in~\cite{Farquet:2014kma}). The induced metric on the boundary $\partial M$ is 
\begin{equation}
h= \frac{1}{\ell^2}(\td\psi+F^{(0)}\td y)^2+\ell^2 e^{u^{(0)}}(\td{x}^2+\td{y}^2)
\end{equation}
and is fully determined by the Toda data $(u^{(0)}(x), u^{(1)}(x))$.  The boundary inherits the torus symmetry and therefore by compactness  must be $S^3$, a lens space $L(p,q)$, $S^1\times S^2$ or $T^3$.  It is straightforward to see that any analytic solution to the Toda equation \eqref{Todareduced} is uniquely determined by $(u^{(0)}(x), u^{(1)}(x))$. We deduce that any toric self-dual Einstein metric with $\lambda<0$ is uniquely determined by its boundary metric.  However, from this perspective it is unclear which boundary data $(u^{(0)}(x), u^{(1)}(x))$ leads to complete metrics.  Therefore, we will take an alternative approach.

\subsection{Conformally ALE manifolds}

As shown above, we can extend $M$ through the boundary defined by $z=0$ to a region $z<0$. It is then natural to consider extensions $(\hat{M}, \hat g)$, so $M\subset \hat M$, that are complete K\"ahler manifolds.  We will find that it is natural to consider extensions that are ALE. It is thus convenient to first recall the definition of ALE.

\begin{definition}[See e.g. Def. 3.1 in \cite{Araneda:2025uqo}]
A Riemannian manifold $(\hat{M},\hat{g})$ is asymptotically locally Euclidean (ALE) of order $\tau>0$ if there is an end diffeomorphic to $\mathbb{R}\times S$, with $S=S^3/\Gamma$ and $\Gamma$ a finite subgroup of $O(4)$ acting freely, and the metric on the end has the asymptotic behaviour
\begin{align}\label{gALE}
\hat{g} = \td{r}^2+r^2\gamma+h,
\end{align}
where $\gamma$ is a Riemannian metric on $S$ with constant curvature $+1$, and $|\nabla^{k}h|=O(r^{-\tau-k})$ as $r\to\infty$ for all $k\geq0$, where the covariant derivative and norm are with respect to $\td{r}^2+r^2\gamma$.

 We say that $(\hat{M},\hat{g})$ is toric ALE  if it is toric and ALE and and the $T^2$-action is also an isometric action on the end $(S,\gamma)$. This implies that $S$ is a lens space $L(p,q)$.
\end{definition}

It is helpful to  consider some examples.  The simplest is hyperbolic space in Example \ref{ex:hyperbolic}, from which we can immediately see that an extension is $\hat M=\mathbb{R}^4$ with $\hat g$ given by the Euclidean metric. Therefore, the extension in this case is AE.  A more nontrivial example follows.

\begin{example}
    Consider the self-dual negative Einstein Taub-bolt solution in Example \ref{ex:TB}. Using $z=1/(r+n)$ as a coordinate, the K\"ahler metric is 
\begin{align}
\hat{g} &=\frac{\td z^2}{F(z)}+  F(z) (\td \tau+ 2n \cos\theta \td \phi)^2+  (1-2n z)(\td \theta^2+\sin^2\theta \td \phi^2), \\ &F(z) := \frac{ 1- 4n z+ \ell^2 z^2}{\ell^2 (1-2n z)} \; ,
\end{align}
where recall $0<z \leq  1/(r_++n)$ and $n>\ell/2$.  The hypersurface $z=0$ corresponds to the conformal boundary $\partial M$ of the Einstein manifold which is $L(p,1)$ with metric
\[
h= \frac{1}{\ell^2} (\td \tau+ 2n \cos\theta \td \phi)^2+  \td \theta^2+\sin^2\theta \td \phi^2 \; .
\]
In fact, the K\"ahler metric extends to a smooth metric for all $z\leq 0$ since $F(z)>0$ in that region. 
 Interestingly, this metric is ALE as $z\to - \infty$. To see this set $1+2n z= \tfrac{1}{4}R^2$ and one finds that 
\[
\hat g\sim \td R^2 +\tfrac{1}{4} R^2 [  ( \td \tau/(2n) + \cos\theta \td \phi)^2+ \td \theta^2+\sin^2\theta \td \phi^2]
\]
as $R\to \infty$.  From Example \ref{ex:TB} the period of $\tau/(2n)$ is $4\pi /p$, so  we see that this extension $(\hat M, \hat g)$ is ALE with the cross-section at infinity $S=L(p,1)$.
\end{example}

Motivated by these examples, we are now ready to define the class of conformal extensions of self-dual Einstein manifolds that we will study.

\begin{definition}\label{def:conformalALE}
A self-dual Einstein manifold $(M,g)$ admitting a Killing vector $\xi$, is said to be {\it conformally ALE with respect to $\xi$} if there is a complete K\"ahler manifold $(\hat M, \hat g, \hat \omega)$ such that $\xi$ extends to a Killing field of $(\hat M, \hat g)$, and:
\begin{enumerate}
        \item $M\subset \hat M$ and $M$ extends to a manifold with a boundary $\partial M$ within $\hat M$, that is, $M\cup \partial M\subset \hat M$
        \item the K\"ahler structure $(M, \hat{g}, \hat \omega)$ induced on $M$ coincides with that defined by $\xi$ 
        \item the Hamiltonian function $z$ defined by $\iota_\xi \hat \omega = - \td z$ is globally defined on $\hat M$ and
        \begin{enumerate} 
            \item $z>0$ on $M$ and coincides with that defined by $\xi$
            \item $z=0$ and $\td z \neq 0$ on $\partial M$
        \end{enumerate} 
        \item $(\hat M, \hat g)$ is ALE or order $\tau>1$ and $z<0$ on the ALE end.
\end{enumerate}
If $(M,g)$ is also toric, with $\xi$ in the span of the torus Killing fields,  we say $(M,g)$ is {\it toric conformally ALE} if additionally, $(\hat M, \hat g)$ is toric and $z$ is invariant under the torus symmetry.
\end{definition}

As shown above, the examples of hyperbolic space and self-dual negative Einstein Taub-bolt are toric conformally ALE.
In particular, note that our general definition of conformally ALE implies that $(M,g)$ is conformally compact with a defining function $z$ on $M$. Thus our definition  simply amounts to the addditional requirement that the conformally related K\"ahler metric extends to an ALE K\"ahler manifold.   We will find that this class is more amenable to classification than merely assuming conformally compact.

First, we will need the asymptotic form of the function $z$ in the ALE asymptotic region.  It is convenient to introduce explicit coordinates on the cross-section $S$. Choosing Hopf coordinates $(\theta,\phi_1,\phi_2)$, with $0\leq\theta\leq\pi/2$, we have $\hat{g}=g_0+O(r^{-\tau})$, where the asymptotic metric
\begin{align}\label{flatALE}
g_0 = \td r^2 + r^2 (\td \theta^2+\cos^2\theta\td\phi_1^2+\sin^2\theta\td\phi_2^2) 
\end{align}
is flat and the vector fields $\partial_{\phi_1},\partial_{\phi_2}$ generate the torus symmetry $T^2$.  If $S=S^3$ then $\phi_1, \phi_2$ are both $2\pi$ periodic, whereas if $S=L(p,q)$ then they are subject to identifications on a more general lattice.

\begin{proposition}\label{prop:zWALE}
Let $(\hat{M},\hat{g})$ be a toric ALE K\"ahler manifold, with K\"ahler form $\hat\omega$. Let $\partial_{\phi^{1}},\partial_{\phi^{2}}$ be a basis of toric Killing fields defined by Hopf coordinates at infinity, and consider the Killing vector $\xi=a_{1}\partial_{\phi^{1}}+a_{2}\partial_{\phi^{2}}$. Then we have the following asymptotic behaviour as $r\to\infty$:
\begin{align}
z ={}& \pm \tfrac{1}{2} r^2 (a_1\cos^2\theta+a_2\sin^2\theta) + O(r^{-\tau+1}).
\end{align}
\end{proposition}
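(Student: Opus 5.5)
The plan is to compare the K\"ahler form $\hat\omega$ with a flat K\"ahler form $\omega_0$ of the model metric $g_0$ in \eqref{flatALE}, and to integrate the defining relation $\iota_\xi\hat\omega=-\td z$ against the corresponding flat relation. The Hamiltonian $z$ is only determined up to an additive constant. In the statement this constant is fixed by the normalisation implicit in the claimed expansion: since $\tau>1$, the error term decays, so $z$ is the Hamiltonian whose non-growing part vanishes at infinity. I will construct precisely that representative.

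\textbf{Step 1 (asymptotic complex structure).} Since $\hat\nabla\hat J=0$ and $\hat g-g_0=O(r^{-\tau})$ with $\nabla^k$ decay, the difference of Levi-Civita connections is $O(r^{-\tau-1})$. Hence $\nabla^{g_0}\hat J=O(r^{-\tau-1})$. Integrating radially, $\hat J\to J_0$ for some $g_0$-parallel complex structure $J_0$, with $\hat J-J_0=O(r^{-\tau})$; since $\tau>1$ the angular dependence of the limit is controlled, so $J_0$ is a genuine constant structure on the end. The $T^2$-action preserves $\hat J$, so $J_0$ is $T^2$-invariant. The constant complex structures on $\mathbb{R}^4$ commuting with the rotations $\partial_{\phi_1},\partial_{\phi_2}$ are, up to orientation, $\omega_0=\pm r_1\td r_1\wedge\td\phi_1\pm r_2\td r_2\wedge\td\phi_2$ with $r_1=r\cos\theta$, $r_2=r\sin\theta$. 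Compatibility with the orientation fixed by $\hat\omega\wedge\hat\omega>0$ forces equal signs, and the overall sign is the $\pm$ in the statement. (On a lens-space quotient the same argument applies on the universal cover of the end.)

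\textbf{Step 2 (flat Hamiltonian and the error one-form).} For $\omega_0$ one computes $\iota_\xi\omega_0=-\td z_0$ with $z_0=\pm\tfrac12(a_1r_1^2+a_2r_2^2)=\pm\tfrac12 r^2(a_1\cos^2\theta+a_2\sin^2\theta)$. Set $f=z-z_0$. Then $\td f=-\iota_\xi(\hat\omega-\omega_0)=:E$. The form $E$ is closed and $T^2$-invariant, and since $|\xi|_{g_0}=O(r)$ it satisfies $|\nabla^kE|=O(r^{1-\tau-k})$. In particular $f=f(r,\theta)$.

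\textbf{Step 3 (integration, the main obstacle).} Integrating $\partial_r f=E(\partial_r)=O(r^{1-\tau})$ from infinity shows that $f$ has a limit $c(\theta)$ along each ray. Closedness of $E$ together with the decay of $E(\partial_\theta)$ along rays forces $c$ to be constant, and we normalise $z$ so that $c=0$. The crude estimate obtained this way is only $f=O(r^{2-\tau})$, and sharpening it to $O(r^{1-\tau})$ is where I expect the difficulty to lie.

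To sharpen the rate I would use the axes. On $\theta=0$ the field $\partial_{\phi_2}$ vanishes, and on $\theta=\pi/2$ the field $\partial_{\phi_1}$ vanishes. Writing $z=a_1\mu_1+a_2\mu_2$ in terms of the moment maps, each $\mu_i$ is then constant along the axis where its generator vanishes. This should fix the relevant constants exactly, and the $\theta$-integration of $E(\partial_\theta)$, whose integrand carries a factor $|\partial_{\phi_i}|\sim r\sin\theta$ or $r\cos\theta$ vanishing at the axis, should then be estimated using the derivative bound $|\nabla E|=O(r^{-\tau})$ rather than $|E|$.

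If this falls short, I would use instead that $\Delta_{g_0}f=O(r^{-\tau})$, which follows from $\Delta_{\hat g}z$ being expressible through $\hat\nabla\xi$, with $\hat\nabla\xi$ agreeing with $\nabla^{g_0}\xi$ up to $O(r^{-\tau})$. A $T^2$-invariant weighted elliptic expansion on the end, where the only indicial roots in the relevant range are $0$ and $-2$, then shows that $f$ minus its constant mode decays, and the derivative estimate recovers the $O(r^{1-\tau})$ rate.
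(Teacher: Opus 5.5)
Your Steps 1 and 2 are essentially the paper's argument. The paper writes the flat K\"ahler form in the basis $\omega^1,\omega^2,\omega^3$ of self-dual forms and uses parallelism and $T^2$-invariance to get $\hat\omega_0=\pm\omega^1$. It then integrates $\partial_\mu\hat\omega=O(r^{-\tau-1})$ in Cartesian coordinates to get $\hat\omega=\hat\omega_0+O(r^{-\tau})$, and simply asserts $z=z_0+O(r^{1-\tau})$.

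Your diagnosis in Step 3 is right, and the paper's proof does not supply the missing power either. What actually follows is $\td(z-z_0)=-\iota_\xi(\hat\omega-\hat\omega_0)=O(r^{1-\tau})$. This integrates only to:
\begin{itemize}
\item $O(r^{2-\tau})$ for $1<\tau<2$;
\item $O(\log r)$ at $\tau=2$;
\item a constant plus $O(r^{2-\tau})$ for $\tau>2$.
\end{itemize}
Incidentally, your claim that $f$ has a limit $c(\theta)$ along each ray needs $\tau>2$. For $\tau\le 2$ the integral of $r^{1-\tau}$ from infinity diverges.

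The genuine gap is the sharpening. Neither of your routes can close it, because the $O(r^{1-\tau})$ rate fails in a general ALE chart. Take the Burns metric on the blow-up of $\mathbb{C}^2$ at the origin, $\hat\omega=\tfrac{i}{2}\partial\bar\partial\bigl(|w|^2+m\log|w|^2\bigr)$. It is toric ALE of order $\tau=2$ in the standard chart $w_1=r\cos\theta\, e^{i\phi_1}$, $w_2=r\sin\theta\, e^{i\phi_2}$. A direct computation gives the moment maps $\mu_j=\tfrac12|w_j|^2(1+m/|w|^2)$, hence
\[
z=\tfrac12\,(r^2+m)\,(a_1\cos^2\theta+a_2\sin^2\theta)+C .
\]
So $z-z_0=\tfrac{m}{2}\beta(\theta)+C$ is $O(1)=O(r^{2-\tau})$, but it is not $O(r^{-1})$ for any constant $C$ once $a_1\neq a_2$.

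This example satisfies everything you derived, which shows where each route stalls.
\begin{itemize}
\item \emph{Axis route.} $\mu_2-\mu_2^0$ vanishes on $\theta=0$ and $\mu_1-\mu_1^0$ on $\theta=\pi/2$, so the axis constants are indeed fixed. But the $\theta$-integral in between is still of size $r\cdot r\cdot r^{-\tau}$. Using $|\nabla E|$ instead of $|E|$ does not change that power of $r$: here $\partial_\theta(\mu_2-\mu_2^0)=m\sin\theta\cos\theta$ is $O(1)$.
\item \emph{Elliptic route.} $\Delta_{g_0}\bigl(\tfrac m2\beta\bigr)=O(r^{-2})=O(r^{-\tau})$. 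So this term is exactly a particular solution driven by a source at the borderline rate, not a homogeneous mode. An indicial-root analysis of the homogeneous problem therefore cannot remove it.
\end{itemize}

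The discrepancy disappears only after changing the chart: the new radius $\sqrt{r^2+m}$ preserves the ALE order $2$ and absorbs the term. So what your argument proves, like the paper's, is $z=z_0+O(r^{2-\tau})$ in the sense above. The stated rate could at best hold in a suitably adapted chart, and that would need a separate argument.
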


\begin{proof}
We first derive the asymptotics of a K\"ahler form invariant under the torus symmetry for a toric ALE K\"ahler structure.  First we do this for the flat space model \eqref{flatALE}.  Introduce Cartesian coordinates $x^\mu$, $\mu=0,1,2,3$,
\[
x^0= r \cos \theta \cos\phi_1, \qquad x^1= r \cos\theta \sin \phi_1, \qquad x^2= r \sin\theta \cos\phi_2, \qquad x^3= r \sin\theta \sin\phi_2
\]
so $g_0= \td x^\mu \td x^\mu$. A basis of self-dual 2-forms is
\[
\omega^1= \td x^0 \wedge \td x^1+ \td x^2 \wedge \td x^3, \qquad  \omega^2= \td x^1 \wedge \td x^2+ \td x^0 \wedge \td x^3, \qquad \omega^3= \td x^1 \wedge \td x^3+ \td x^2 \wedge \td x^0
\]
where $\text{vol}_0 = \td x^0 \wedge \td x^1\wedge \td x^2 \wedge \td x^3$.  Hence we can expand the K\"ahler form $\hat{\omega}_0 = A_i \omega^i$ for some functions $A_i$ and since it must be parallel we deduce that the $A_i$ are all constants.  On the other hand, invariance under the torus symmetry implies that $A_2=A_3=0$  since only $\omega^1$ is invariant. Thus  $\hat{\omega}_0 = \pm \omega^1$, where reverting to polar coordinates,
\[
\omega^1= r \td r\wedge (\cos^2\theta \td \phi_1+ \sin^2\theta \td \phi_2) + r^2 \sin^2\theta\cos\theta \td \theta \wedge (\td \phi_2- \td \phi_1) \; .
\]
We deduce that
\[
 \iota_\xi \hat\omega_0 = -\td z_0, \qquad z_0 := \pm \tfrac{1}{2} r^2 ( a_1 \cos^2\theta+ a_2 \sin^2\theta)
\]
where $\xi$ is the Killing field defined above.

Now consider a toric ALE K\"ahler manifold. Clearly, the K\"ahler form must satisfy $\hat\omega = O(1)$, since $|\hat\omega|_{\hat{g}}= 2$.  Working in Cartesians, the ALE condition implies $\partial_\mu \hat g_{\nu \rho}= O(r^{-\tau-1})$ and hence the connection components are $O(r^{-\tau-1})$. Therefore,  $0=\hat\nabla_{\mu} \hat\omega_{\nu \rho}= \partial_\mu\hat\omega_{\nu \rho} + O(r^{-\tau-1})$ and hence $\partial_\mu  \hat\omega = O(r^{-\tau-1})$ which integrates to
$\hat{\omega} =\hat \omega_0 + O(r^{-\tau})$ where we have chosen the integration constants to coincide with the flat space K\"ahler form.
It follows that 
$z= z_0+  O(r^{1-\tau})$.
\end{proof}

\begin{cor}\label{cor:ALEend}
    If $(M, g)$ is a toric conformally ALE self-dual Einstein manifold the asymptotic ALE end of $\hat{M}$ corresponds to $z\to -\infty$.
\end{cor}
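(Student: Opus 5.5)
We apply Proposition \ref{prop:zWALE} to the Killing field $\xi$, which by Definition \ref{def:conformalALE} extends to a Killing field of the toric ALE manifold $(\hat M,\hat g)$ and lies in the span of the torus Killing fields. Writing $\xi=a_1\partial_{\phi_1}+a_2\partial_{\phi_2}$ in terms of the Hopf basis at infinity, we obtain
\[
z=\pm\tfrac12 r^2\,(a_1\cos^2\theta+a_2\sin^2\theta)+O(r^{1-\tau}) .
\]
Condition (4) of Definition \ref{def:conformalALE} requires $z<0$ on the whole ALE end, i.e.\ for all sufficiently large $r$ and all $\theta\in[0,\pi/2]$. The plan is to show that this forces the leading coefficient to be strictly negative and bounded away from zero, so that $z\to-\infty$ uniformly on the end.

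Choose the sign $\pm$ so that the leading term is $-\tfrac12 r^2 f(\theta)$ with $f(\theta)=a_1\cos^2\theta+a_2\sin^2\theta$. Evaluating at $\theta=0$ and $\theta=\pi/2$ gives $z=-\tfrac12 a_1 r^2+O(r^{1-\tau})$ and $z=-\tfrac12 a_2 r^2+O(r^{1-\tau})$, and since $\tau>1$ the error term is $o(1)$. The condition $z<0$ then forces $a_1\ge 0$ and $a_2\ge 0$; we must exclude the degenerate cases where one of them vanishes.

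This exclusion is the main obstacle. Suppose $a_1=0$. Then $z=o(1)$ along the circle $\theta=0$ at infinity, where $\xi=a_2\partial_{\phi_2}$ vanishes in the model. To rule this out, we would use that $z$ is the Hamiltonian of $\xi$, whose gradient satisfies $|\td z|_{\hat g}=|\xi|_{\hat g}$. Near $\theta=0$ this gives $|\xi|\sim a_2 r\sin\theta$, which vanishes on the axis, so the Hamiltonian alone does not immediately give a contradiction. Instead, we note that $\xi=a_2\partial_{\phi_2}$ is $\theta$-dependent and vanishes on an entire axis component extending to infinity. In that case $z\to 0$ along that axis, which contradicts condition (3b): the level set $z=0$ is $\partial M$, which is compact (Section \ref{sec:asymptotics}), whereas here it would have to approach infinity. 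More concretely, for $r$ large and $\theta$ small we have $z\approx -\tfrac12 a_2 r^2\sin^2\theta$, so the level sets $z=-\epsilon$ reach arbitrarily large $r$. This in turn would force $M$ or $\partial M$ to be non-compact near infinity, contradicting the fact that the closure of $M$ is a compact manifold with boundary (conformal compactness). Hence $a_1>0$, and by the same argument $a_2>0$.

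With $a_1,a_2>0$ we have $f(\theta)\ge\min(a_1,a_2)>0$, and therefore $z\le -\tfrac12\min(a_1,a_2)r^2+O(r^{1-\tau})\to-\infty$ uniformly as $r\to\infty$. Conversely, $z$ is continuous on $\hat M$ and the complement of the end is compact, so $z$ is bounded there. It follows that $z\to-\infty$ exactly along the ALE end, which proves the corollary.
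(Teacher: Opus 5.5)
Your overall route is the paper's: apply Proposition \ref{prop:zWALE} to $\xi$, use property (4) to show that $\beta(\theta)=a_1\cos^2\theta+a_2\sin^2\theta$ has a fixed sign and never vanishes, then conclude $z\to-\infty$ uniformly, plus the converse. Evaluating at $\theta=0,\pi/2$ correctly rules out opposite signs, since $z\to+\infty$ would violate $z<0$. Your converse via compactness of $\hat M$ minus the end is also fine.

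\textbf{The gap is in excluding the degenerate case} $a_1=0$ (or $a_2=0$). Knowing only $z=O(r^{1-\tau})\to 0$ along $\theta=0$ does not contradict (3b), because $z$ may tend to $0$ from below without ever vanishing. Nor does the fact that the level sets $\{z=-\epsilon\}$ reach arbitrarily large $r$ say anything about $M$ or $\partial M$. These level sets lie in $\{z<0\}$, which by (3a)--(3b) is disjoint from $M\cup\partial M$. So no non-compactness of $\overline{M}$ or $\partial M$ follows, and as written the case $a_1=0$ is not excluded.

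The idea you set aside is exactly what closes it. Since $\iota_\xi\hat\omega=-\td z$, we have $\td z=0$ at every zero of $\xi$, so $z$ is constant on each connected component of the fixed-point set of $\xi$. If $a_1=0$, then $\xi=a_2\partial_{\phi_2}$ vanishes on the connected surface $\{\theta=0\}$ of the end, so $z$ is constant there. Since $z=O(r^{1-\tau})\to 0$ along it (using $\tau>1$), we get $z\equiv 0$ on that surface. This contradicts $z<0$ on the ALE end. The case $a_2=0$ is identical with $\theta=\pi/2$.

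This observation is also what justifies the paper's brief claim that in this case ``$r\to\infty$ intersects the conformal boundary $z=0$''. Without it, $z\to 0$ alone does not give that intersection.
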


\begin{proof}
    First suppose the function $\beta:= a_1 \cos^2\theta+ a_2 \sin^2\theta$ is strictly positive or negative for all $\theta$. Then from Proposition \ref{prop:zWALE} it is clear that $r\to \infty$ implies $z\to \pm \infty$ depending on the sign of $\beta$. Hence from our definition of conformally ALE (property (4)) we must have $z\to -\infty$. Also, from Proposition \ref{prop:zWALE}  we have $z= O(r^2)$ as $r\to \infty$ and hence inverting this $r^{-2} = O(|z|^{-1})$ as $|z|\to \infty$, so we deduce that  $z \to -\infty$ implies $r\to \infty$.  
    
    If $\beta$ vanishes for some value of $\theta$, then Proposition \ref{prop:zWALE} implies that at this value $z= O(r^{1-\tau})$ so $z\to 0$ since in our definition we assume $\tau>1$ ;  but this means that $r\to \infty$ intersects the conformal boundary $z=0$ which is contrary to our definition of conformally ALE (property (4)). Therefore, we deduce that $\beta$ cannot vanish. 
\end{proof}

\begin{remark}\label{remark:beta}
    The above corollary shows that our definition restricts the choice of Killing field $\xi$ so that $\beta$ does not vanish. 
\end{remark}

We are now ready to determine the asymptotic form of the harmonic function $V$.  We will follow the method developed for Ricci-flat toric ALE Hermitian instantons~\cite{Araneda:2025uqo}.

\begin{proposition}\label{prop:asym}
Let $(M,g)$ be a toric conformally ALE self-dual Einstein manifold.
\begin{enumerate}
    \item
Weyl-Papapetrou coordinates in the asymptotic end are given by 
\begin{equation}
\rho = - z \, \sech(x) (1+ O(|z|^{-1}))  , \qquad \zeta = z  \tanh(x) (1+ O(|z|^{-1}) )   + O(\log |z|)\label{eqweylALE}
\end{equation}
as $z\to -\infty$. 

\item  Define coordinates $R:=\sqrt{\rho^2+ \zeta^2}$ and $X:= \sinh^{-1}(\zeta/\rho)$ so
\begin{equation}
    \rho= R\sech X , \qquad \zeta = R \tanh X \label{eq:RXcoords}
\end{equation}Then $R= -z +O(\log |z|)$ so the ALE end corresponds to $R\to \infty$ and the inverse is
 \begin{equation}
     z = -R+ O(\log R), \qquad x = -X + O(R^{-1} \log R)
 \end{equation}

\item 
The axisymmetric harmonic function $V$ in \eqref{Todmetric} can be written as 
\begin{align}\label{asymptoticsVALE}
V = - V_{0} +\tilde V, 
\end{align}
where $V_0$ is the basic solution \eqref{eq:V0} and $\tilde{V}$ is an axisymmetric harmonic function such that
    \begin{equation}
\partial_R \tilde{V} =  O(\log R/R)\; , \qquad  \partial_X \tilde{V}= O(\log R) \; ,
\end{equation}
    as $R\to \infty $.
\end{enumerate}
\end{proposition}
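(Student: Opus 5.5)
The plan follows the method of \cite{Araneda:2025uqo}: first pin down the asymptotics of the Toda data $(u,W,A)$ in the ALE end, and then read off $\rho,\zeta$ and $V$ from the definitions \eqref{definitionV}.

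For part (1), we work in the Hopf coordinates of Proposition \ref{prop:zWALE}, with $\hat g = g_0 + O(r^{-\tau})$. By Corollary \ref{cor:ALEend} and Remark \ref{remark:beta} we have $z = -\tfrac12 r^2\beta(\theta)+O(r^{1-\tau})$ with $\beta>0$, after choosing signs. By \eqref{eq:WPcoords}, $\rho^2=\det \hat G$. Computing the Gram matrix of the torus Killing fields of $g_0$ gives $\det \hat G = r^4\cos^2\theta\sin^2\theta\,|\det L|^{2}(1+O(r^{-\tau}))$, where $L$ relates $(\partial_\psi,\partial_y)$ to $(\partial_{\phi_1},\partial_{\phi_2})$. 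Since $\partial_\psi=\xi$, the Hamiltonian $x$ of $\eta=\partial_y$ is also explicit: $\iota_\eta\hat\omega$ is determined by $\hat\omega=\hat\omega_0+O(r^{-\tau})$, so the coordinate $x$ (through $A$ and $u$) can be integrated directly. The cleanest route is to compute the flat model exactly. There $\hat g$ is flat and $\xi$ is a general torus generator, which is the hyperbolic/Taub-bolt-type structure at large $|z|$. This yields $\rho = -z\sech x$ and $\zeta = z\tanh x$ exactly for the model; we can fix the scaling \eqref{eq:scaling} to normalise. We then treat the error terms by perturbation: corrections of relative order $r^{-\tau}\sim|z|^{-\tau/2}$ give the $(1+O(|z|^{-1}))$ factors. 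The harmonic conjugate $\zeta$ is obtained by integrating $\td\zeta=-\star_q\td\rho$, and integrating $|z|^{-1}$-decaying corrections in the radial direction produces the $O(\log|z|)$ term. The requirement $\tau>1$ is used here to make these integrals converge up to the logarithm.

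Part (2) follows from (1) by algebra. We have $R^2=\rho^2+\zeta^2 = z^2(1+O(|z|^{-1}))+O(|z|\log|z|)$, so $R=-z+O(\log|z|)$. Similarly, $\zeta/\rho = -\sinh x\,(1+O(|z|^{-1}\log|z|))$, which gives $X=-x+O(R^{-1}\log R)$ after inverting.

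For part (3), we use \eqref{definitionV}: $V_\rho = 2z/\rho$ and $V_\zeta=-2x$. Substituting part (2) gives $V_\rho = -2R/\rho + O(\log R/\rho)$ and $V_\zeta = 2X+O(R^{-1}\log R)$. Next we compare with $V_0$ from \eqref{eq:V0}. Direct differentiation gives $\partial_\rho V_0 = 2\sqrt{\rho^2+\zeta^2}/\rho$ and $\partial_\zeta V_0 = -\log\frac{R+\zeta}{R-\zeta} = -2X$. Hence $\tilde V := V+V_0$ is harmonic, with $\tilde V_\zeta = O(R^{-1}\log R)$ and $\tilde V_\rho = O(\log R/\rho)$. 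Converting to $(R,X)$ via $\partial_R = \sech X\,\partial_\rho+\tanh X\,\partial_\zeta$ and $\partial_X = -R\tanh X\sech X\,\partial_\rho + R\sech^2X\,\partial_\zeta$, and using $\rho=R\sech X$, gives $\partial_R\tilde V = O(\log R/R)$ and $\partial_X\tilde V = O(\log R)$.

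The main obstacle is part (1): obtaining uniform error control in the angular variable, including near the axes $\theta=0,\pi/2$ where $\rho\to0$. I would handle this, as in \cite{Araneda:2025uqo}, by factoring the Gram determinant as $\cos^2\theta\sin^2\theta$ times a smooth positive function with controlled decay, using smoothness of the torus action on the end.
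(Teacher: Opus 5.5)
\textbf{The gap is in part (1).} Your parts (2) and (3) are essentially the paper's argument. In (3) you compute $\tilde V_\rho=2(z+R)/\rho$ and $\tilde V_\zeta=-2(x+X)$ first and then change to $(R,X)$, which is equivalent. In (2), $h(z)$ enters $\zeta/\rho$ additively, as $O(\cosh x\,\log|z|/|z|)$. It should not be written as a relative error, which is non-uniform near $x=0$. It still inverts to $X=-x+O(R^{-1}\log R)$ because $\td\sinh^{-1}a/\td a=(1+a^2)^{-1/2}$.

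In part (1) you never use the self-dual Einstein relation \eqref{W}, and that relation is what makes the statement true. The paper rewrites it as $u_z=2/z-2W/(\ell^2 z)$. It then uses only the leading ALE behaviour
\[
W=|\xi|^{-2}_{\hat g}=(\alpha r^2)^{-1}(1+O(r^{-\tau})),
\]
where $\alpha=a_1^2\cos^2\theta+a_2^2\sin^2\theta$ is bounded below because $a_1a_2\neq0$ (Remark \ref{remark:beta}). This gives $u_z=2/z+O(z^{-2})$ uniformly. Integrating in $z$ at fixed $x$ and inserting into \eqref{Todareduced} forces the $x$-dependence to satisfy Liouville's equation. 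After normalisation this gives $e^u=z^2\sech^2 x\,(1+O(|z|^{-1}))$, and $\rho=e^{u/2}$. Then $\zeta$ follows from $\zeta_x=\tfrac12(e^u)_z$ and $\zeta_z=-\tfrac12u_x$; the $O(|z|^{-1})$ correction in $u_x$ integrates to $h(z)=O(\log|z|)$.

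Your replacement, perturbation about the flat model, does not work, for three reasons.

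\textbf{The model formulas are false for a general torus generator.} In the Toda form $\partial_z$ is proportional to $\mathrm{grad}\,z$. So $x$ is constant along gradient lines of $z_0=-\tfrac12(a_1\varrho_1^2+a_2\varrho_2^2)$, with $\varrho_1=r\cos\theta$, $\varrho_2=r\sin\theta$. These are the curves $\varrho_2^{a_1}\propto\varrho_1^{a_2}$, not rays. With $e^u=\det\hat G\propto\varrho_1^2\varrho_2^2$ one finds
\[
z\,u_z=\frac{(a_1+a_2)(a_1\cos^2\theta+a_2\sin^2\theta)}{a_1^2\cos^2\theta+a_2^2\sin^2\theta},
\]
which equals $2$ identically only if $a_1=a_2$. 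That is the Hopf case of Examples \ref{ex:hyperbolic} and \ref{ex:TB}. Otherwise $e^u/z^2\to0$ at fixed $x$, and $\rho=-z\sech x$ fails even at leading order. It is \eqref{W} together with $W\to0$ that forces $zu_z\to2$.

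\textbf{The error orders do not come out.} Even with the right model, corrections of relative size $O(r^{-\tau})=O(|z|^{-\tau/2})$ are not $O(|z|^{-1})$ when $1<\tau<2$. Integrating them for $\zeta$ gives $O(|z|^{1-\tau/2})$, not $O(\log|z|)$. The hypothesis $\tau>1$ plays no role in producing the logarithm; the paper uses it only in Corollary \ref{cor:ALEend}.

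\textbf{The perturbation step has no mechanism, and one identification is wrong.} You give no way to control the isothermal coordinate $x$ under an $O(r^{-\tau})$ perturbation of the metric. Also, $x$ is not the Hamiltonian of $\partial_y$. Up to sign conventions, $\iota_{\partial_y}\hat\omega=-(F\,\td z+We^u\,\td x)$. In Example \ref{ex:hyperbolic}, $x$ depends only on $\theta$, while any moment map of a rotation of flat $\mathbb{R}^4$ is quadratic in $r$.
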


\begin{proof}
 First we prove (1). Rearrange \eqref{W} to
\[
u_z = \frac{2}{z} - \frac{2 W}{\ell^2 z}   \; .
\]
Next, note that $W^{-1}= | \xi |_{\hat{g}}^2$ and hence ALE implies
\[
W^{-1} = r^2 ( a_1^2 \cos^2\theta + a_2^2 \sin^2 \theta + O(r^{-\tau}))  \; .
\]
Now recall that our definition of conformally ALE with respect to $\xi$ implies that the function $\beta$ cannot vanish anywhere, see Remark \ref{remark:beta}. This implies that $a_1 a_2 \neq 0$ and hence $\alpha:= a_1^2 \cos^2\theta + a_2^2 \sin^2 \theta>0$.  Hence
\[
W = \frac{1}{\alpha r^2} (1+ O(r^{-\tau}) ) \; ,
\]
so in particular, from the proof of Corollary \ref{cor:ALEend}, we deduce that $W= O(|z|^{-1})$ as $z \to -\infty$.  Therefore, 
\begin{equation}
    u_z = \frac{2}{z} + O(z^{-2})  \;, \label{eq:uz}
\end{equation}
which can be integrated in the same way as for Ricci flat toric Hermitian ALE instantons (in that case the error is $O(z^{-3})$~\cite{Araneda:2025uqo}).  Integrating \eqref{eq:uz} and using the Toda equation \eqref{Todareduced} gives
\[
e^u = z^2 a^2 \text{sech}^2 (a x) (1+ O(|z|^{-1}))
\]
where $a>0$ is a constant and we have fixed an additive constant in $x$ (this corresponds to changing $V$ by an affine function of $\zeta$).  We can use the scaling freedom \eqref{eq:scaling} to fix $a=1$ which gives the claimed $\rho$ (taking the negative root since $z<0$ in this region).  Now integrating $\zeta_x= \tfrac{1}{2} (e^u)_z$ and $\zeta_z = - \tfrac{1}{2} u_x$ for $\zeta$ gives
\[
\zeta= z  \tanh(x) (1+ O(|z|^{-1}) )   + h(z)
\]
where $h(z)= O(\log |z|)$. 

We now prove (2). It immediately follows that $R^2= z^2+ O(z \log |z|)$ and hence we get the asymptotics for $R$.  Thus in particular, $R= O(|z|)$ and $R^{-1} = O(|z|^{-1})$, so  $z= O(R)$ and $z^{-1}= O(R^{-1})$.  This shows that the asymptotic end $z\to -\infty$ corresponds to $R\to \infty$ in Weyl-Papapetrou coordinates. This also gives the claimed inverse relation for $z$. To obtain the inverse for $x$, we can proceed by first noting that
\[
\sinh (-x) = \left( \frac{\zeta - h(z)}{\rho} \right) (1+ O(|z|^{-1}) ). 
\]
Then, using the general expansion,
\begin{equation}
    \sinh^{-1} ( a (1+b)) = \sinh^{-1}a + \frac{a}{\sqrt{1+a^2}} b + O(b^2) \; ,
\end{equation}
as $b \to 0$ where the error term is uniformly bounded in $a$, 
with $a=(\zeta - h(z))/\rho$ and $b= O(z^{-1})$ gives
\[
x = -\sinh^{-1}\left( \frac{\zeta - h(z)}{\rho} \right) + O(z^{-1})= -\sinh^{-1}\left( \frac{\zeta - h(z)}{\rho} \right) + O(R^{-1})  \; ,
\]
where we have used $a/\sqrt{1+a^2} = O(1)$ and $z^{-1}= O(R^{-1})$. To deal with the first term we can use a slight variant of the above expansion
\begin{equation}
    \sinh^{-1} ( a +b \sqrt{1+a^2} ) = \sinh^{-1}a +  b + O(b^2) \; ,
\end{equation}
where again the error terms are uniformly bounded in $a$, with $a= \zeta/\rho$ and $b=-h/R$, which gives
\[
\sinh^{-1}\left( \frac{\zeta - h(z)}{\rho} \right)= \sinh^{-1} \left( \frac{\zeta}{\rho} \right) -\frac{h}{R}+ O(h^2/R^2) = X + O( \log R /R)  \; ,
\]
where in the final equality we used $z= O(R)$ and $h= O(\log |z|)$. Putting it all together gives the claimed inverse relation for $x$.

Finally we prove (3). Changing from $(\rho, \zeta)$ to $(R, X)$ coordinates we have
\begin{equation}
\partial_R V = \frac{2z}{R} - \frac{2 \zeta x}{R} , \qquad \partial_X V= -\frac{2 \zeta z}{R} - \frac{2 \rho^2 x}{R} 
\end{equation}
and using the asymptotic formulae for $z, x$ in (2) we get
\[
\partial_R V= -2( 1- X \tanh X) + O(\log R/R), \quad \partial_X V = 2 R(\tanh X+X \text{sech}^2X) + O(\log R)
\]
Noting that  $V_0= 2R (1- X \tanh X)$ and defining $\tilde{V}= V+ V_0$ we deduce the result.
\end{proof}

\begin{remark}  
A toric conformally ALE self-dual Einstein manifold requires a choice of a Killing vector $\xi$ (in the span of the torus Killing fields) that defines the conformal K\"ahler structure and it is the associated K\"ahler metric $\hat g = z^2 g$ that is ALE. It is natural to ask what this means for the K\"ahler metrics associated to a different choice of Killing field $\tilde\xi= b_1\partial_\psi +b_2 \partial_y$ if $b_2 \neq 0$. Recall, if $\hat g$ is ALE then $\hat g\sim \td r^2+ r^2 \gamma$ and by Propositions \ref{prop:zWALE} and \ref{prop:asym} we have $-z \sim |\beta(\theta)| r^2$, $-z \sim R$. Then using the formulae for a change of Killing field in Section \ref{subsec:changeKVF} the K\"ahler metric defined by $\tilde\xi$, which is $\tilde{{g}}= \tilde{z}^2 g$,  must behave as
\[
\tilde{{g}} \sim   \frac{1}{b_2^2 \beta(\theta)^2}\left(  \td \tilde r^2+ \tilde r^2 \gamma \right)  \; ,
\]
 where $\tilde r\:= 1/r\to 0$. Thus we see that the ALE end corresponds to an orbifold point of the conformal K\"ahler structure with respect to all other choices of Killing field with $b_2\neq 0$.
\end{remark}

\section{Global analysis of toric self-dual Einstein instantons}
\label{sec:globalAnalysis}

\subsection{Rod structure and geometry near the axis}

In this section we will perform a global analysis of toric self-dual Einstein gravitational instantons $(M, g)$.  We first recall a technical definition.

\begin{definition}
    A gravitational instanton $(M,g)$ is toric if $M$ is simply connected and admits an (effective) isometric torus action with no discrete isotropy groups.
\end{definition}
 It has been shown that under these assumptions the orbit space $\mathcal{O}= M/T^2$ is a 2-dimensional simply connected manifold with boundaries and corners~\cite{Hollands:2007aj, OR}.  The Gram matrix of the toric Killing fields $\eta_i$, $i=1,2$, is $G_{ij}:= g(\eta_i, \eta_j)$, and the interior, the boundaries and corners of $\mathcal{O}$ correspond to points in $M$ where $G$ has rank-2, rank-1 and rank-0 respectively.

 The general local form of a toric self-dual negative Einstein metric is given by  Proposition \ref{prop:Tod}. As in Definition \ref{def:conformalALE}, we will assume that the conformal factor $z$ is strictly positive and globally defined on $M$. Therefore, the conformally related K\"ahler metric $\hat{g}= z^2 g$ is also globally defined on $M$. Hence, we can equivalently perform a global analysis of the toric scalar-flat K\"ahler manifold $(M, \hat g)$. In fact, we will perform the global analysis on the ALE extension $(\hat M, \hat g)$ introduced in Definition \ref{def:conformalALE}.  From Proposition \ref{prop:Tod} we deduce that the local form of the K\"ahler metric is
\begin{equation}\label{eq:SFKWP}
    \hat g = W^{-1}(\td\psi+F\td{y})^2+W\rho^2\td{y}^2+e^{2\nu}(\td\rho^2+\td\zeta^2),
\end{equation}
where $W, F, \nu$ are determined in terms of an axisymmetric harmonic function $V(\rho,\zeta)$ via equation \eqref{functionsTodmetric}. In the basis $\partial_\psi, \partial_y$ the Gram matrix is
\begin{equation}
\hat G = \left( \begin{array}{cc}  W^{-1} & W^{-1} F \\  W^{-1} F & W\rho^2 + W^{-1} F^2 \end{array} \right)
\end{equation}
and as observed in equation \eqref{eq:WPcoords}, the K\"ahler metric \eqref{eq:SFKWP} is automatically in Weyl-Papapetrou coordinates. For the class of asymptotics we consider this forms a global set of coordinates. 

\begin{prop}
Let $(\hat M, \hat g)$ be a toric ALE scalar-flat K\"ahler manifold. Then, Weyl-Papapetrou coordinates $(\rho, \zeta)$ are a global chart on the interior of $\mathcal{O}$.
\end{prop}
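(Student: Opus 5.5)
The plan is the standard argument for stationary axisymmetric spacetimes (Hollands--Yazadjiev, and the ALE Hermitian case in \cite{Araneda:2025uqo}), adapted to the scalar-flat K\"ahler metric $\hat g$.

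\textbf{Step 1: $\rho$ is harmonic and sits correctly on the boundary.} Define $\rho:=\sqrt{\det \hat G}$ globally on $\mathcal{O}$. It is independent of the choice of basis of $\mathfrak{t}^2$ up to a constant factor. Since $\hat g$ is scalar-flat K\"ahler, the Einstein--Maxwell observation after \eqref{eq:WPcoords} gives that $\rho$ is harmonic on the interior of $(\mathcal{O},\hat q)$. By construction $\rho>0$ in the interior and $\rho=0$ on $\partial\mathcal{O}$, i.e.\ on the axis rods and corners.

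\textbf{Step 2: $\zeta$ is globally defined.} The orbit space is a simply connected surface with boundary. Hence the closed form $\star_{\hat q}\td\rho$ is exact, and $\zeta$ is globally defined on $\mathcal{O}$ up to an additive constant.

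\textbf{Step 3: $(\rho,\zeta)$ is a diffeomorphism onto the half-plane.} Set $w=\zeta+i\rho$; this is a holomorphic function on the interior of the Riemann surface $(\mathcal{O},[\hat q])$. I would show it is a biholomorphism onto $\{\rho>0\}$ by a properness and degree argument.
\begin{itemize}
\item[(a)] \emph{Properness.} Use the asymptotics of Proposition \ref{prop:asym}: on the ALE end $R=\sqrt{\rho^2+\zeta^2}=-z+O(\log|z|)\to\infty$. Near the axis, $\rho\to0$ with $\zeta$ finite, by the standard local structure of toric actions near fixed circles and fixed points, where $\zeta$ is a regular parameter along each rod. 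So $w$ extends continuously to a proper map $\bar{\mathcal O}\to\{\rho\ge0\}\cup\{\infty\}$ taking $\partial\mathcal{O}$ into the real line.
\item[(b)] \emph{Boundary is mapped bijectively.} On each rod $\zeta$ is strictly monotone. Indeed $\partial_\zeta$ along the boundary equals the normal derivative of $\rho$ up to a nonzero factor, and the normal derivative is nonzero by the boundary-point lemma for the positive harmonic $\rho$ vanishing on the boundary (equivalently, from the smooth local form near rods). At corners, the monotonicity continues across. So the full boundary $\partial\mathcal{O}$ is mapped bijectively onto $\mathbb{R}$, with $\zeta\to\pm\infty$ at the two ends, consistent with the $X\to\pm\infty$ behaviour in Proposition \ref{prop:asym}.
\item[(c)] \emph{Degree one.} A proper holomorphic map between simply connected domains that restricts to a homeomorphism of boundaries has degree one. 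Hence it is a biholomorphism. Alternatively, apply the argument principle to $w-w_0$ on large truncated domains bounded by the axis and a large $R$ arc, using the asymptotics.
\end{itemize}

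\textbf{Step 4: conclusion.} Since $w$ is a biholomorphism onto $\{\rho>0\}$, $\td\rho\wedge\td\zeta\neq0$ everywhere in the interior, and $(\rho,\zeta)$ is a global chart.

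\textbf{Main obstacle.} I expect the hardest part to be the properness and boundary behaviour in Step 3(a)--(b). One must rule out $\rho$ staying bounded along some escaping sequence. This needs the uniform ALE estimates and the fact that the asymptotic end is the only noncompact direction. One must also handle corners, which requires the local normal form near fixed points of the torus action. In that case I would use the fact that the normal form is locally modelled on flat $\mathbb{C}^2$, where $w$ is explicitly $\tfrac12(|z_1|^2-|z_2|^2)+i|z_1||z_2|$ up to scaling, and verify boundary monotonicity there directly.
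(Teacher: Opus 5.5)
Your proof is correct, but it is organised differently from the paper's. The paper only says that the Ricci-flat argument of \cite{Araneda:2025uqo} carries over. The inputs it singles out are the strip-like end, the fact from Proposition~\ref{prop:asym} that the level curve $\rho=\rho_0$ lies entirely in the asymptotic end for large $\rho_0$, and harmonicity of $\rho$. These point to a Hollands--Yazadjiev/Chru\'sciel--Costa-type argument, though I am inferring this rather than reading it off the paper. In that argument one first excludes interior critical points of $\rho$ by a maximum-principle analysis of its level sets. Then $w=\zeta+i\rho$ is a proper local diffeomorphism onto the simply connected half-plane, hence a covering, hence a diffeomorphism.

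You never prove the absence of critical points separately. Instead you use that a proper holomorphic map has a well-defined degree, and you compute it at a rod-interior point. There $\partial_n\rho\neq 0$ makes $w$ a diffeomorphism of half-neighbourhoods. Injectivity of $\zeta$ on $\partial\mathcal{O}$, together with properness, forces every preimage of a nearby point into that half-neighbourhood. So injectivity of $w$ and $\td\rho\wedge\td\zeta\neq 0$ come out together.

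The price is sharper boundary input. You need a nonvanishing normal derivative of $\rho$ on the rods, continuity of $\zeta$ at the corners, $\zeta\to\pm\infty$ along the two semi-infinite rods, and connectedness of $\partial\mathcal{O}$. The last follows from simple connectivity and the single strip-like end, but you use it implicitly and should say so. The level-set route needs only $\rho=0$ on $\partial\mathcal{O}$ plus the asymptotics. On the other hand, your boundary information is exactly what the later rod-structure analysis uses anyway.

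Finally, you do not need the flat $\mathbb{C}^2$ model at the corners. The inward normal derivative of $\rho$ is positive on every rod. By $\td\zeta=-\star_{\hat q}\td\rho$, the tangential derivative of $\zeta$ equals $\mp\partial_n\rho$, so it has a fixed sign along $\partial\mathcal{O}$ with its induced orientation. Since $|\td\zeta|_{\hat q}=|\td\rho|_{\hat q}$ stays bounded near a corner, $\zeta$ is continuous there, and strict monotonicity persists across it.
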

\begin{proof}
Toric ALE implies that the orbit space has an end diffeomorphic to a strip and Proposition \ref{prop:asym} implies the curve $\rho=\rho_0$ for large enough $\rho_0$ must lie in the asymptotic end. 
    The function $\rho$ is harmonic on the orbit space $(\mathcal{O}, \hat q)$, where $\hat q$ is given by \eqref{eq:WPcoords}. Therefore, the same proof as in the Ricci flat case applies~\cite{Araneda:2025uqo}.
\end{proof}   

The upshot of the previous proposition is that the interior of the orbit space is diffeomorphic to the half-plane $H= \{ (\rho, \zeta) \, | \, \rho>0 \}$. The boundary corresponds to $\rho=0$ and divides into intervals $I_i=(z_i, z_{i+1})$ for $i=0, \dots, n$ called `rods' that correspond to boundary segments separated by points $z_i$, $i=1, \dots n$ corresponding to the corners (we define $z_0=-\infty$ and $z_{n+1}=\infty$). The Gram matrix $\hat G$ is rank-1 on each rod $I_i$ with kernel spanned by $v_i$ called a rod vector, whereas the endpoints $\zeta=z_i$ are fixed points of the torus symmetry. The collection of data $\{ (I_i, v_i)_{i=0, \dots, n} \}$  is called the rod structure of the toric instanton. We will normalise the rod vectors $v_i$ to be $2\pi$ periodic, so in a $2\pi$-periodic basis of the $T^2$-action they can be represented by a pair of co-prime integers. The rod structure is then said to be admissible if in such a basis
\begin{equation}
    |\det (v_i, v_{i+1}) |=1    \label{eq:admissibility}
\end{equation}
for all $i=0, \dots, n-1$, which ensures the absence of orbifold singularities at the corners $z_i$. In general the metric $\hat g$ has a conical singularity over each rod $I_i$ as $\rho \to 0$ where $v_i$ vanishes, which is absent iff $v_i$ has $2\pi$-periodic orbits and 
\begin{equation}
 \frac{ | \td |v_i|^2|^2}{4 |v_i|^2}  \to 1, \qquad \zeta \in I_i, \quad \rho \to 0   \; .  \label{eq:conical}
\end{equation}
Then, the metric extends smoothly to $\rho=0$ if all metric components are smooth functions of $\rho^2$.

As we are assuming $z$ is a  smooth function on $\hat M$ we can expand it near any rod $I$ so
\begin{equation}
z = f(\zeta)+ O(\rho^2)  \label{eq:zexp}
\end{equation}
where $f(\zeta)>0$ on $M$.  Then, by the same argument as in~~\cite{Araneda:2025uqo}, we can integrate $z=\tfrac{1}{2} \rho V_\rho$ near each rod to determine the near axis behaviour of $V$ to be
\begin{equation}
V = f(\zeta) \log \rho^2 + g(\zeta)+ O(\rho^2) \;, \label{eqVnearaxis}
\end{equation}
where $f''(\zeta)=0$ by harmonicity of $V$.  Thus $f(\zeta)$ is a piecewise linear function with breaks at the corners $\zeta=z_i$. 

\begin{lemma}
Given a rod structure $\{ (I_i, v_i)_{i=0, \dots n} \}$ as above, 
\begin{equation}
f(\zeta) = A + \tfrac{1}{2} ( f_0'+ f_{n}') \zeta+ \sum_{i=1}^n a_i | \zeta - z_i| , \qquad a_i:= \tfrac{1}{2} (f_i'- f_{i-1}')    \label{eq:f}
\end{equation}
where $f_i' := f'(\zeta)|_{I_i}$ are the constant slopes on each rod and $A$ is a constant.
\end{lemma}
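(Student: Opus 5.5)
We will show that the right-hand side of \eqref{eq:f} reproduces $f$ by checking that the two sides have the same slope on every rod and agree at one point. The starting point is the structure of $f$ established just before the lemma. On each open rod $I_i$ we have $f''=0$, so $f$ is affine there with constant slope $f_i'$. Since $z$ is smooth on $\hat M$ and $f(\zeta)=z|_{\rho=0}$ by \eqref{eq:zexp}, $f$ is continuous across each corner $\zeta=z_i$, where the axis is the closure of the union of rods. Hence $f$ is a continuous piecewise linear function on $\mathbb{R}$ whose breaks lie only at $z_1,\dots,z_n$.

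Next we compare slopes. Let $h(\zeta)$ denote the right-hand side of \eqref{eq:f} with $A$ not yet fixed. Then $h$ is also continuous and piecewise linear with breaks at the $z_i$. For $\zeta\in I_j=(z_j,z_{j+1})$ we have $|\zeta-z_i|=\zeta-z_i$ when $i\le j$ and $|\zeta-z_i|=z_i-\zeta$ when $i>j$. Therefore
\[
h'|_{I_j}=\tfrac12(f_0'+f_n')+\sum_{i\le j}a_i-\sum_{i>j}a_i .
\]
The definition $a_i=\tfrac12(f_i'-f_{i-1}')$ makes both sums telescope:
\[
\sum_{i=1}^{j}a_i=\tfrac12(f_j'-f_0'),\qquad \sum_{i=j+1}^{n}a_i=\tfrac12(f_n'-f_j').
\]
Substituting gives
\[
h'|_{I_j}=\tfrac12(f_0'+f_n')+\tfrac12(f_j'-f_0')-\tfrac12(f_n'-f_j')=f_j'
\]
for every $j=0,\dots,n$. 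This includes the semi-infinite rods $I_0$ and $I_n$, where one of the two sums is empty.

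It remains to conclude. The difference $f-h$ is continuous on $\mathbb{R}$ and has zero derivative on each rod, so it is constant on the whole axis. Choosing $A$ appropriately makes this constant vanish, which gives \eqref{eq:f}.

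The one point needing care is continuity of $f$ at the corners, since the proof uses it to rule out jumps in $f-h$. This follows from the global smoothness of the Hamiltonian $z$ on $\hat M$, which is assumed in Definition \ref{def:conformalALE}. Along the axis closure, $f$ is the restriction of $z$ to the set $\{\rho=0\}$, which is connected in the orbit space, so $f$ has no jumps there. The rest of the argument is elementary.
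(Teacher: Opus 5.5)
Your proof is correct and is essentially the paper's argument: both rest on $f$ being affine on each rod and continuous at the corners $z_i$. Your slope-telescoping step plus ``continuous with zero derivative on each rod implies constant'' stands in for the paper's route, which turns continuity into the recurrence $c_k-c_{k-1}=-2a_kz_k$ and solves it for the intercepts \eqref{eq:ck}; the paper reuses those intercepts later (e.g.\ $c_n=A-\sum_i a_iz_i$ and the rod vectors \eqref{eqrodunified}), and you would recover them by evaluating the right-hand side of \eqref{eq:f} on each rod.
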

\begin{proof}
On each rod $I_k$, $k=0, \dots n$, the function must take the form 
\begin{equation}\label{eq:fIk}
    f(\zeta)|_{I_k}= f_k' \zeta+ c_k
\end{equation} for some constants $c_k$. Therefore, by continuity of $f$ at $\zeta=z_k$ for $k=1, \dots, n$, which is the lower endpoint of $I_k$ and the upper endpoint of $I_{k-1}$,  we must have $\lim_{\zeta\to z_k^+} f(\zeta)|_{I_k}= \lim_{\zeta\to z_k^-} f(\zeta)|_{I_{k-1}}$, which gives
\[
c_k- c_{k-1} = - 2 a_k z_k  \qquad k=1, \dots,  n  \; .
\]
The general solution to this recurrence equation is
\begin{equation}\label{eq:ck}
c_k = A+ \sum_{i=k+1}^n a_i z_i - \sum_{i=1}^k a_i z_i, \qquad k = 0, \dots, n
\end{equation}
where $A$ is a constant.

On the other hand, it is easily seen that \eqref{eq:f} evaluated on each rod gives
\[
f(\zeta)|_{I_k} = f_k' \zeta + A+ \sum_{i=k+1}^n a_i z_i - \sum_{i=1}^k a_i z_i, \qquad k = 0, \dots, n
\]
as required.
\end{proof}

We will now examine all metric components near the rods.
First assume that we are on a rod $I_i$ such that the slope $f'_i\neq 0$. Then we can expand the rest of the metric data as follows
\begin{align}
&W = \ell^2 \left( 1+ \frac{f(\zeta) V_{\zeta\zeta}}{2f'(\zeta)^2} + O(\rho^2) \right)    \label{eq:Waxis}
\\
&F= -\ell^2 \left( \frac{f(\zeta)}{f'(\zeta)} - \zeta + O(\rho^2) \right) \label{eq:Faxis}  \\
&e^{2\nu} =  W f'(\zeta)^2+ O(\rho^2)  \; .
\end{align}
Now on each rod we define $F_i:= F|_{\rho=0, \zeta\in I_i}$. Then, from the above expansion we deduce $\partial_\zeta F_i = 0$, so $F_i$ is a constant associated to each rod. Inspecting the metric \eqref{eq:SFKWP}, we deduce that if  we are on a rod $I_i$ such that $f_i'\neq 0$ and $W>0$ then the rod vector is
\begin{equation}
v_i = f_i' (\partial_y - F_i \partial_\psi) \; ,\label{eqrodvec}
\end{equation}
 where we have fixed the normalisation by the requirement \eqref{eq:conical} which ensures the absence of a conical singularity over $I_i$ provided the orbits of $v_i$ are $2\pi$ periodic. 

Now suppose we are on a rod $I_i$ where $f'_i=0$. Thus defining $f_i:= f(z_i)$ we deduce that $f_i= f(\zeta)= f_{i+1}$ for all $\zeta\in I_i$. The expansion of the metric data as $\rho \to 0$ is now
\[
W= \frac{2\ell^2 f_i}{\rho^2 g''(\zeta)} + O(1), \qquad F=O(1), \qquad  e^{2\nu} = \frac{\ell^2}{2} f_i g''(\zeta)+ O(\rho^2)
\]
provided $g''(\zeta)\neq 0$. It follows that the K\"ahler metric as $\rho \to 0$
\[
\hat{g}= \frac{\ell^2 f_i g''(\zeta)}{2} \left( \td \rho^2 + \frac{1}{\ell^4 f_i^2} \rho^2 \td \psi^2 \right) + O(\rho^2) \td \psi \td y+ O(1) \td y^2+ O(1) \td \zeta^2
\]
which has a conical singularity as $\rho\to 0$ unless the rod vector
\begin{equation}
v_i = \ell^2 f_i \partial_\psi
\label{eqrodvec2}
\end{equation}
is $2\pi$ periodic.

The above implies the following useful formulae which will be important in our regularity analysis.
\begin{lemma}
 If adjacent rods have $f_{i-1}', f_i' \neq 0$ then
\begin{equation}
F_i - F_{i-1} =- \ell^2 f_i \left( \frac{1}{f_i'}- \frac{1}{f_{i-1}'} \right)   \label{eq:Fdiff1}
\end{equation}

If $f_i'=0$, so $f_{i-1}'\neq 0, f_{i+1}'\neq 0$, then  
\begin{equation}
F_{i+1} - F_{i-1} =\ell^2 \left( z_{i+1}- z_i - f_i \left( \frac{1}{f_{i+1}'}- \frac{1}{f_{i-1}'} \right)\right)  \label{eq:Fdiff2}
\end{equation}
\end{lemma}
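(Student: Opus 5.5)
The plan is to read off the constants $F_i$ from the near-axis expansion \eqref{eq:Faxis}, and to match across a corner using continuity of $f$ there. For the rod with $f'=0$, the extra ingredient will be the monopole equation \eqref{monopoleeq}.

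\textbf{First formula, $f_{i-1}',f_i'\neq 0$.} On each rod $I_k$ with $f_k'\neq0$, \eqref{eq:Faxis} together with \eqref{eq:fIk} gives
\[
F_k=-\ell^2\Big(\frac{f_k'\zeta+c_k}{f_k'}-\zeta\Big)=-\ell^2\,\frac{c_k}{f_k'} .
\]
This is constant, as already noted. Both expressions are evaluated in the limit $\zeta\to z_i$ from the two sides. Continuity of $f$ at the corner gives $f_k'z_i+c_k=f_i$ for $k=i-1,i$, hence $c_k/f_k'=f_i/f_k'-z_i$. Subtracting the two expressions, the $z_i$ terms cancel and
\[
F_i-F_{i-1}=-\ell^2 f_i\Big(\frac1{f_i'}-\frac1{f_{i-1}'}\Big),
\]
which is \eqref{eq:Fdiff1}. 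The only subtlety is that the limit $\rho\to0$ and the limit $\zeta\to z_i$ must commute. Because $F_k$ is constant on each rod, it suffices to use the rod-interior values, so this causes no difficulty.

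\textbf{Second formula, $f_i'=0$.} Here \eqref{eq:Faxis} does not apply on $I_i$, since $F$ is only $O(1)$ there. It does apply on $I_{i-1}$ and $I_{i+1}$, giving
\[
F_{i-1}=-\ell^2\Big(\frac{f_i}{f_{i-1}'}-z_i\Big),\qquad F_{i+1}=-\ell^2\Big(\frac{f_{i+1}}{f_{i+1}'}-z_{i+1}\Big).
\]
Continuity at $z_i$ and $z_{i+1}$ enters here, together with the fact that $f\equiv f_i=f_{i+1}$ on $I_i$. Subtracting yields
\[
F_{i+1}-F_{i-1}=\ell^2\Big(z_{i+1}-z_i-f_i\Big(\frac1{f_{i+1}'}-\frac1{f_{i-1}'}\Big)\Big),
\]
which is \eqref{eq:Fdiff2}.

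\textbf{Main obstacle.} Each $F_k$ is a separate constant of integration, so it must be checked that they all belong to one globally consistent function $F$, i.e.\ that no independent additive constant is allowed per rod. I would verify this by noting that $F$ is given explicitly in terms of $V$ by \eqref{functionsTodmetric}, a single global function on the half-plane $H$. The values $F_k$ are therefore unambiguous limits of one function, and the differences computed above are genuine.

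If one prefers to see the jump across the rod $I_i$ directly, integrate $\partial_\zeta F$ along $\rho=\epsilon$ from near $z_i$ to near $z_{i+1}$ and let $\epsilon\to0$. Using $F=\ell^2(\zeta-V_\rho V_{\rho\zeta}/(V_{\zeta\zeta}^2+V_{\rho\zeta}^2))$ and $V_{\rho\zeta}=O(\rho)$ on a rod with $f'=0$, the integrand tends to $\ell^2$. This reproduces the $\ell^2(z_{i+1}-z_i)$ term and serves as a consistency check.
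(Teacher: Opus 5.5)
Your proof is correct and is essentially the paper's argument: you read off $F_k=-\ell^2 c_k/f_k'$ from \eqref{eq:Faxis} on the rods with nonzero slope, pass to the shared corner(s) using continuity of $f$ (and $f_i=f_{i+1}$ when $f_i'=0$), and subtract; the monopole equation you announce at the start is never actually needed. The one slip is in your optional consistency check, which plays no role in the proof: on a rod with $f_i'=0$ you have $V_\rho\sim 2f_i/\rho$, so $V_\rho V_{\rho\zeta}$ is $O(1)$ rather than negligible, and hence $\partial_\zeta F|_{\rho=0}=\ell^2\bigl(1+f_i\,(g'''/g''^2)'\bigr)$ with $g$ as in \eqref{eqVnearaxis}, not $\ell^2$.
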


\begin{proof}
    First consider the case $f_{i-1}', f_i' \neq 0$. Then
    \[
    F_i - F_{i-1} = F(0, z_i^+) - F(0, z_i^-)
    \]
and the claim follows from using \eqref{eq:Faxis}.
    Now consider the case $f_i'=0$ so $f_{i-1}'\neq 0, f_{i+1}'\neq 0$. Then
    \[
    F_{i+1} - F_{i-1} = F(0, z_{i+1}^+) - F(0, z_i^-)
    \]
    and the claim follows from using \eqref{eq:Faxis}.
    \end{proof}

In order to impose smoothness at the fixed points $\zeta= z_i$ we  must require that  for any three consecutive rods $I_{j-1}, I_j, I_{j+1}$ both pairs of consecutive rod vectors form a basis for the torus action. The condition for this is that the basis $(v_{j-1}, v_j)$ is related to the basis $(v_j, v_{j+1})$ by a $GL(2,\mathbb{Z})$ transformation, which is equivalent to
\begin{equation}
v_{j-1}+ \epsilon_j v_{j+1}= l_j v_j  \label{eqreg}
\end{equation}
where $\epsilon_j=\pm 1$ and $l_j \in \mathbb{Z}$ for $j=1, \dots, n-1$.

There are now various cases to consider. 
First, if $f_{j-1}', f_j', f_{j+1}'\neq 0$, from the explicit form of the rod vectors, \eqref{eqreg} is equivalent to
\begin{align}
&f_{j-1}'+\epsilon_j f_{j+1}'= l_j f_j', \label{eqreg1} \\ &f_{j-1}' F_{j-1}+\epsilon_j f_{j+1}' F_{j+1}= l_j f_j' F_j  \; .
\end{align}
Now eliminate $l_j$ and use \eqref{eq:Fdiff1} to eliminate $F_j-F_{j-1}$ to get
\begin{equation}
f_{j+1}= \epsilon_j f_j \frac{f_j'- f_{j-1}'}{f_{j+1}'- f_j'}  \; .  \label{eqreg2}
\end{equation}
Thus the regularity conditions reduce to \eqref{eqreg1} and \eqref{eqreg2}.

Secondly, if $f_{j-1}' =0$ and $f_j', f_{j+1}'\neq 0$, then using the above rod vectors equation \eqref{eqreg} is equivalent to
\begin{equation}
\ell^2 f_{j-1}- \epsilon_j f_{j+1}' F_{j+1} =- l_j f_j' F_j, \qquad \epsilon_j f_{j+1}' = l_j f_j'
\end{equation}
and eliminating $l_j$ and using \eqref{eq:Fdiff2} for $F_{j+1}-F_j$ and the fact that $f_{j-1}=f_j$ (since $f'_{j-1}=0$) implies that the regularity conditions are given again by \eqref{eqreg2} and \eqref{eqreg1} with $f_{j-1}'=0$.   The same result is true if $f_{j-1}', f_j'\neq 0$ and $f_{j+1}'=0$.

The final case is if $f_j'=0$ in which case $f_{j-1}', f_{j+1}'\neq 0$.  Then \eqref{eqreg} is equivalent to 
\begin{align}
&f_{j-1}'+ \epsilon_j f_{j+1}'=0, \label{specialreg1} \\ &- f_{j-1}'F_{j-1} - \epsilon_j f_{j+1}' F_{j+1} = l_j f_j \ell^2   
\end{align}
and eliminating $f_{j-1}'$ and again elimnating $F_{j+1}-F_{j-1}$ using \eqref{eq:Fdiff2} implies
\begin{equation}
z_{j+1}- z_j = \frac{f_j}{f_{j+1}'} \left( 1+ \epsilon_j (1- l_j)\right) \; .  \label{specialreg2}
\end{equation}
Thus the regularity conditions in this case reduce to \eqref{specialreg1} and \eqref{specialreg2}.

\subsection{Uniqueness theorem}

In this section we will derive the general form of the harmonic function $V$ for the class of toric conformally ALE self-dual Einstein manifolds defined earlier. In particular, we will show that combining the near axis behaviour derived in the previous section, with the asymptotic behaviour we derived for conformally ALE manifolds, fixes the harmonic function $V$.

\begin{prop}
The harmonic function    
\begin{equation}\label{eq:Vmulti}
V = A \log \rho^2 + \sum_{i=1}^n a_i V_0(\rho, \zeta-z_i), \qquad \sum_{i=1}^n a_i=-1 \; ,
\end{equation}
is the most general axisymmetric harmonic function that satisfies the axis behaviour \eqref{eqVnearaxis}  with $f(\zeta)$ given by \eqref{eq:f} and the asymptotics in Proposition \ref{prop:asym} (up to an irrelevant additive constant). In particular, the slopes at infinity must be  $f_0'=-f_n'=1$.
\end{prop}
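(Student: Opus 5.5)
Our approach is to subtract from $V$ the explicit multipole candidate built from the axis data, and then show that the remainder is a harmonic function that is smooth on all of $\mathbb{R}^3$ and grows too slowly to be anything but trivial. Concretely, given $f$ in the form \eqref{eq:f}, set
\[
V_* := A\log\rho^2 + \sum_{i=1}^n a_i V_0(\rho,\zeta-z_i) + \tfrac12(f_0'+f_n')\,\zeta\log\rho^2\cdot 0 ,
\]
where we first check which terms are actually needed. By \eqref{eq:V0axis}, $V_0(\rho,\zeta-z_i)=|\zeta-z_i|\log\rho^2+O(1)$ near $\rho=0$. Hence $A\log\rho^2+\sum a_iV_0(\rho,\zeta-z_i)$ has near-axis coefficient of $\log\rho^2$ equal to $A+\sum_i a_i|\zeta-z_i|$. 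This differs from $f$ only by the linear term $\tfrac12(f_0'+f_n')\zeta$.

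A term $\zeta\log\rho^2$ is not harmonic. The only axisymmetric harmonic functions with a $\log\rho^2$ coefficient that is linear in $\zeta$ are combinations of $\log\rho^2$ and the $V_0$'s, or the function $\zeta\log\rho^2$ corrected by a $\rho^2$ term, namely $\zeta\log\rho^2-\dots$. We will have to rule the latter out using the asymptotics. So define $U:=V-A\log\rho^2-\sum_i a_iV_0(\rho,\zeta-z_i)$. Its axis behaviour is $U=\tfrac12(f_0'+f_n')\zeta\log\rho^2+\tilde g(\zeta)+O(\rho^2)$, with $\tilde g$ bounded on compact sets away from the $z_i$ (we must also check mild behaviour at the $z_i$, where $V_0$ is continuous).

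The next step uses the asymptotics. By Proposition \ref{prop:asym}, $V=-V_0+\tilde V$ with $\partial_R\tilde V=O(\log R/R)$. Each $V_0(\rho,\zeta-z_i)$ equals $V_0(\rho,\zeta)+O(\log R)$ at large $R$; this follows by Taylor expanding in the shift, since $\partial_\zeta V_0=-\log\frac{R+\zeta}{R-\zeta}$. Also $A\log\rho^2=O(\log R)$ away from the axis. Then $U=-(1+\sum a_i)V_0+O(\log R)$ in a suitable averaged sense. Since $V_0\sim 2R(1-X\tanh X)$ grows linearly in $R$, while $U$ must be controlled, we compare the axis limits. Along the axis for $\zeta\to\pm\infty$, the $\log\rho^2$ coefficient of $V$ must match that of $-V_0$, namely $-|\zeta|$, up to lower order. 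This forces $f(\zeta)\sim-|\zeta|$, which gives $f_0'=1$ and $f_n'=-1$. Hence $f_0'+f_n'=0$, and $\sum_ia_i=\tfrac12(f_n'-f_0')=-1$ by telescoping the definition of $a_i$. This is where the two stated constraints come from.

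With the linear term gone, $U$ has no $\log\rho^2$ singularity on the axis, so it extends to a harmonic function on $\mathbb{R}^3$ minus the finite set $\{\rho=0,\zeta=z_i\}$. Near each $z_i$ the function $V_0$ is bounded, so $U$ is bounded there and these points are removable singularities. Thus $U$ is an entire axisymmetric harmonic function with $|U|=O(\log R)$, using the gradient bounds on $\tilde V$ together with the matching of $-V_0$ against $\sum a_iV_0(\cdot-z_i)$. Liouville's theorem for harmonic functions of sublinear growth then gives $U=$ const, which is the claim.

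\textbf{Main obstacle.} The delicate step is the growth estimate for $U$. Proposition \ref{prop:asym} controls only derivatives of $\tilde V$ in $(R,X)$ coordinates, with $\partial_X\tilde V=O(\log R)$ and $X$ unbounded near the axis. We therefore need to integrate along rays carefully to get $U=O(R^{1-\epsilon})$ uniformly, including near the axis. We would first establish the bound on a fixed cone $|X|\le X_0$, then extend it to near-axis regions using the explicit near-axis expansion \eqref{eqVnearaxis}. Alternatively, we could apply the mean value property on spheres, where the axis is a measure-zero set and the logarithmic singularities are integrable.
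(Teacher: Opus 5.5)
Your overall plan works: subtract the multipole candidate, fix the end slopes from the asymptotics, then apply a Liouville argument. It differs from the paper's proof at the Liouville step, and several intermediate claims are false as written.

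\textbf{Errors in the intermediate steps.}
\begin{itemize}
\item $\zeta\log\rho^2$ is axisymmetric harmonic: $\rho\partial_\rho(\zeta\log\rho^2)=2\zeta$ is independent of $\rho$, and $\partial_\zeta^2(\zeta\log\rho^2)=0$. This is why the paper simply carries $\tfrac12(f_0'+f_n')\zeta\log\rho^2$ as an explicit term. It is harmless for you, since you kill that term with the asymptotics anyway.
\item The slope step needs an argument, not just the assertion that coefficients must match. Use the error terms of Proposition~\ref{prop:asym}, uniform in $X$. Letting $X\to\pm\infty$ at fixed large $R$ in $\partial_X\tilde V$ (or in $R\partial_R\tilde V$, as the paper does) isolates the axis coefficient $f(\pm R)+R$ and forces it to be $O(\log R)$. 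That gives $f_0'=-f_n'=1$.
\item More seriously, $V_0(\rho,\zeta-z_i)-V_0(\rho,\zeta)$ is not $O(\log R)$. Since $\partial_\zeta V_0=-2X$ and $\partial_\zeta^2V_0=-2/R$, the difference equals $2z_iX+O(1/R)$, which is unbounded near the axis at fixed $R$. The same holds for $A\log\rho^2$ and for $\tilde V$ itself. So the pointwise bound $|U|=O(\log R)$ does not follow from your decomposition.
\item Your first proposed fix (a cone bound plus \eqref{eqVnearaxis}) is not enough. That expansion carries no uniformity as $|\zeta|\to\infty$.
\item $V_0$ is not bounded near $(0,z_i)$; it blows up along approaches hugging the axis. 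Removability at the corners needs boundedness of $U$ itself, a point the paper also passes over.
\end{itemize}

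\textbf{Your second fix works.} Integrate $\partial_R\tilde V=O(\log R/R)$ along $X=0$, then $\partial_X\tilde V=O(\log R)$ in $X$. This gives $|\tilde V|\le C(\log^2R+|X|\log R)$. The remaining pieces of $U$ are $O(\log R+|X|)$. The area element on the sphere of radius $R$ is $2\pi R^2\sech^2X\,\td X$, and $|X|\sech^2X$ is integrable, so the spherical mean of $|U|$ is $O(\log^2R)$. Hence $\int_{B_r(x_0)}|U|=O(r^3\log^2r)$. The interior estimate $|\nabla U(x_0)|\le Cr^{-4}\int_{B_r(x_0)}|U|$ then forces $U$ to be constant.

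\textbf{How the paper avoids this detour.} It applies Liouville to $H=R\partial_RU=x^i\partial_iU$ instead of to $U$. The operator $R\partial_R$ annihilates exactly the troublesome pieces: $R\partial_R\log\rho^2=2$, and $\partial_\zeta V_0$ is homogeneous of degree zero. So $H$ inherits a uniform pointwise $O(\log R)$ bound from the radial estimate alone. Hence $H$ is constant, equal to $H(0)=0$, and $U$, being homogeneous of degree zero and smooth at the origin, is constant. The paper's route is cleaner and uses only the $\partial_R$ bound. Yours uses both derivative bounds plus an $L^1$ Liouville theorem, but reaches the same conclusion. Once $f_0'+f_n'=0$, your $U$ coincides with the paper's.
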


\begin{proof}
First, using \eqref{eq:V0axis}, we find that the most general harmonic function that satisfies the axis behaviour \eqref{eqVnearaxis} with $f(\zeta)$ given by \eqref{eq:f} is 
\[
V = A \log \rho^2+ \frac{1}{2} (f_n'+f_0') \zeta \log \rho^2  + \sum_{i=1}^n a_i V_0(\rho, \zeta-z_i)  + U  \; ,
\]
where $\sum_{i=1}^n a_i=\tfrac{1}{2}( f_n'-f_0')$ and $U$ is an arbitrary smooth axisymmetric harmonic function (to see this simply define $U$ by this equation and observe it is harmonic and smooth at the axis). 

Next, changing to $(R, X)$ coordinates \eqref{eq:RXcoords}, observe that
\begin{align}
 R\partial_R \left( V_0(\rho, \zeta)+ \sum_{i=1}^n a_i V_0(\rho, \zeta-z_i)\right) &= 2 R \left(1+\sum_{i=1}^n a_i \right) (1-X\tanh X) + O(1/R) \\ 
 \partial_X \left( V_0(\rho, \zeta)+ \sum_{i=1}^n a_i V_0(\rho, \zeta-z_i)\right) &= -2 R \left(1+\sum_{i=1}^n a_i \right) ( X \text{sech}^2 X + 
\tanh X) \nonumber \\ &+ 2\sum_{i=1}^n a_i z_i + O(1/R^2) \; ,
\end{align}
where the error terms are uniformly bounded in $X$.   On the other hand, 
\begin{align}
R\partial_R (\zeta \log \rho^2) &= 2R \tanh X (1+ \log R) - 2 R \tanh X \log \cosh X \\ &= 2 \tanh X (R \log R ) -2 R |X| \tanh X +  O(R)
\end{align}
where we used $\log \cosh X= |X|+ \log (1+e^{-2|X|})-\log 2$
and 
\begin{align}
\partial_X (\zeta \log \rho^2) & = 2 \text{sech}^2 X (R \log R)- 2R (\text{sech}^2 X \log \cosh X + \tanh^2 X) \\ & = 2 \text{sech}^2 X (R \log R) + O(R)  \; .
\end{align}
Therefore, putting everything together $\tilde{V}:= V+V_0$ satisfies
\begin{align}
R \partial_R \tilde{V} &= R \partial_RU+ (f_n'+f_0') \tanh X (R \log R )\nonumber  \\ &-2 R \tanh X \left\{   X\left(1+\sum_{i=1}^n a_i \right) + |X|\frac{1}{2} (f_n'+f_0') \right\}   + O(R) \\ 
\partial_X \tilde V &=  \partial_X U+ (f_n'+f_0') \text{sech}^2 X ( R \log R) +O(R)
\end{align}
Now recall the asymptotics in Proposition \ref{prop:asym}. First consider the condition $R \partial_R \tilde{V} = O(\log R)$;  fixing a large enough $R$ and using the fact that $R\partial_R U$ at fixed $R$ is a smooth and bounded function of $\tanh X$ (i.e. it is a smooth axisymmetric function on the constant $R$ spheres and hence must be bounded) implies that 
we must have 
\[
1+\sum_{i=1}^n a_i + \frac{1}{2} (f_n'+f_0')=0  , \qquad  1+\sum_{i=1}^n a_i -  \frac{1}{2} (f_n'+f_0')=0
\]
where the two equations arise  by taking $X\to \infty $ and $X\to -\infty$ and using the uniform bounds on the error terms. It follows that 
\[
f_0'=-f_n'=1 \; , \qquad \sum_{i=1}^n a_i=-1
\]
as claimed. Then the above asymptotic formulae reduce to
\[
R\partial_R \tilde V= R \partial_R U+ O(1/R), \qquad \partial_X \tilde V =\partial_XU + O(1)  \; .
\]
The first equation now implies  that $H=R\partial_R U=O(\log R)$ and since $H=R\partial_R U= x^i \partial_i U$ is itself a smooth axisymmetric harmonic function the bound on $H$ implies that $R\partial_R U$ is a constant. Thus we can write $R\partial_R U= a$ for constant $a$ and hence integrating and using harmonicity we get $U= a \log \rho + b X+ c$ where $b,c$ are constants. But $U$ is smooth on all of $\mathbb{R}^3$ so $a=b =0$ and hence $U$ is a constant.  The asymptotic conditions in Proposition \ref{prop:asym} are now satisfied.
\end{proof}

Therefore, we have shown that $V$ is given by a multipole solution \eqref{eq:Vmulti}.  We now need to perform a global analysis of this family of metrics.  It is useful to note that
\begin{align}
    z &= A+\sum_{i=1}^n a_i \sqrt{\rho^2+(\zeta-z_i)^2} \label{eq:zmulti} \; ,\\
V_{\zeta \zeta} &= -2 \sum_{i=1}^n \frac{a_i}{\sqrt{\rho^2+ (\zeta-z_i)^2}}  \; .  \label{eq:Vzz}
\end{align}
We may now deduce an important property of the multipole solution.

\begin{prop}\label{prop:multiVsol} Consider the multipole solution defined by \eqref{eq:Vmulti}.
For each fixed point $\zeta=z_i$ we must have $f_i a_i<0$. In particular, if all fixed points are inside the domain $z>0$ then:
\begin{enumerate}
    \item $a_i<0$ for all $i$ so  $f(\zeta)$  is concave
    \item $A>0$
    \item if $\rho>0$ then $W>0$ and $V_{\zeta\zeta}^2+V_{\zeta \rho}^2>0$ and are smoooth functions 
    \item if $\rho =0$,  $\zeta\in I_k$ and $f'_k\neq 0$ then $W>0$
\end{enumerate}
\end{prop}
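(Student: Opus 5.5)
The plan is to read off the sign condition from the local behaviour of $W$ near a corner, and then use the explicit formulas \eqref{eq:zmulti}, \eqref{eq:Vzz} for the rest.

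\emph{Sign condition at a fixed point.} Write $r_j:=\sqrt{\rho^2+(\zeta-z_j)^2}$. From $z=\tfrac12\rho V_\rho$ we get $V_\rho=2z/\rho$ and $\rho V_{\rho\zeta}=2z_\zeta$. The formula for $W$ in \eqref{functionsTodmetric} can then be rewritten as
\[
W=\ell^2\,\frac{\rho^2(V_{\zeta\zeta}^2+V_{\rho\zeta}^2)+2zV_{\zeta\zeta}}{\rho^2(V_{\zeta\zeta}^2+V_{\rho\zeta}^2)} .
\]
Near $\zeta=z_i$ the $i$-th term dominates, giving $V_{\zeta\zeta}\simeq -2a_i/r_i$ and $\rho V_{\rho\zeta}\simeq 2a_i(\zeta-z_i)/r_i$. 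Hence $\rho^2(V_{\zeta\zeta}^2+V_{\rho\zeta}^2)\to 4a_i^2$, while $z\to f_i$. This yields
\[
W=\ell^2\Big(1-\frac{f_i}{a_i r_i}\Big)+O(1)\qquad\text{as } r_i\to 0 .
\]
Since $\xi=\partial_\psi$ vanishes at a fixed point of the torus, $W^{-1}=|\xi|^2_{\hat g}$ is positive near the corner and tends to $0$. So $W\to+\infty$, which forces $f_i/a_i<0$. Here $a_i\neq0$ because $z_i$ is a genuine break point of $f$.

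The degenerate case $f_i=0$ would require the next order of the expansion. I expect to exclude it either by that computation or by noting that $z$ would then fail to have $\td z\neq 0$ (or be smooth) at a corner lying on $\{z=0\}$. I flag this as a point needing care.

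\emph{Items (1) and (2).} If all fixed points lie in $z>0$, then $f_i>0$, so $a_i<0$ for every $i$. Then $f$ in \eqref{eq:f} has only negative kinks, so it is concave; this is (1). For (2), set $b_i:=-2a_i>0$. Then $f_i=A-\tfrac12\sum_j b_j|z_i-z_j|>0$ gives $A>\tfrac12\sum_j b_j|z_i-z_j|\ge 0$ for each $i$.

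\emph{Item (3), the main step.} For $\rho>0$ all $r_j>0$, so every quantity is smooth. Also $V_{\zeta\zeta}=\sum_j b_j/r_j>0$, so $V_{\zeta\zeta}^2+V_{\rho\zeta}^2>0$. Positivity of $W$ reduces to positivity of its numerator $N:=\rho^2(V_{\zeta\zeta}^2+V_{\rho\zeta}^2)+2zV_{\zeta\zeta}$. I would expand $N$ as a double sum using $z=A-\tfrac12\sum_j b_jr_j$ and $\rho V_{\rho\zeta}=-\sum_j b_j(\zeta-z_j)/r_j$. The diagonal and cross terms should collapse, via the identity $\rho^2+(\zeta-z_i)(\zeta-z_j)-\tfrac12(r_i^2+r_j^2)=-\tfrac12(z_i-z_j)^2$, to
\[
N=2A\sum_i\frac{b_i}{r_i}-\sum_{i,j}\frac{b_ib_j(z_i-z_j)^2}{2r_ir_j}.
\]
Next, apply the strict bound on $A$ from item (2) inside the first sum and symmetrize in $i,j$. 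This gives
\[
N>\sum_{i,j}b_ib_j|z_i-z_j|\,\tfrac12\Big(\frac1{r_i}+\frac1{r_j}\Big).
\]
By the triangle inequality $|z_i-z_j|\le r_i+r_j$ we have $(z_i-z_j)^2/(r_ir_j)\le |z_i-z_j|(1/r_i+1/r_j)$. This bounds the negative double sum by the positive one, so $N>0$.

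\emph{Item (4).} On a rod $I_k$ with $f_k'\neq0$, I would take $\rho\to0$ in the same identity. By \eqref{eq:Waxis}, $W=\ell^2\big(1+fV_{\zeta\zeta}/(2f_k'^2)\big)$, and $\rho V_{\rho\zeta}\to 2f_k'$. The limiting numerator is $4f_k'^2+2fV_{\zeta\zeta}$, and it equals the limit of the double-sum expression above with $r_j=|\zeta-z_j|$. The same triangle-inequality argument, which is now an equality case only on a measure-zero configuration and is handled by the strict inequality from $A$, gives positivity.

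The main obstacle is the algebraic reduction of $N$ to the double-sum form. The rest is either the local expansion at a corner or the elementary inequality.
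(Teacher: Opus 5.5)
Your items (1)–(4) follow the paper's proof. Your numerator $N$ is $4B$ in the paper's notation (with $b_i=-2a_i$). The strict bound $A>\tfrac12\sum_j b_j|z_i-z_j|$ from $f_i>0$, followed by symmetrisation and $|z_i-z_j|\le r_i+r_j$, is exactly how the paper shows $B>0$, both for $\rho>0$ and on rods with $f_k'\neq 0$. For (2) you read $A>0$ directly off $f_i>0$ rather than from $z<A$; both work. Two small corrections:
\begin{itemize}
\item The displayed bound in (3) should read $N>\sum_{i,j}b_ib_j\big[\tfrac12|z_i-z_j|(r_i^{-1}+r_j^{-1})-(z_i-z_j)^2/(2r_ir_j)\big]$.
\item In (4), equality in the triangle inequality is not a measure-zero event on the axis. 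For $\zeta\in I_k$ it holds identically for every pair $i\le k<j$, so strictness comes only from the bound on $A$, as in the paper.
\end{itemize}

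The real error is in your corner expansion for the sign condition. The $i$-th term does not dominate $\rho V_{\rho\zeta}=2z_\zeta=2\sum_j a_j(\zeta-z_j)/r_j$. Every term is $O(1)$, and those with $j\neq i$ tend to $a_j\,\mathrm{sgn}(z_i-z_j)$. Write $\zeta-z_i=r_i\cos\theta$, $\rho=r_i\sin\theta$ and $s_i:=\sum_{j\neq i}a_j\,\mathrm{sgn}(z_i-z_j)$. Then
\[
\rho^2(V_{\zeta\zeta}^2+V_{\rho\zeta}^2)\to 4\big[(s_i+a_i\cos\theta)^2+a_i^2\sin^2\theta\big].
\]
Since $f_i'=s_i+a_i$ and $f_{i-1}'=s_i-a_i$, this equals $4f_i'^2$ at $\theta=0$ and $4f_{i-1}'^2$ at $\theta=\pi$, not $4a_i^2$. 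For $a_1=a_2=-\tfrac12$ it is $2-2\cos\theta$ at $z_1$, which vanishes along $I_1$. So your formula $W=\ell^2(1-f_i/(a_ir_i))+O(1)$ is false.

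The conclusion survives. The denominator is a bounded sum of squares whose limit is strictly positive along any ray with $0<\theta<\pi$. Meanwhile $2zV_{\zeta\zeta}=-4f_ia_i/r_i+O(1)$. Hence along such a ray $W\to-\infty$ if $f_ia_i>0$, contradicting $W^{-1}=|\xi|^2_{\hat g}>0$ for $\rho>0$.

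Once corrected, your interior approach is a mild variant of the paper's. The paper instead restricts \eqref{eq:Waxis} to a rod of nonzero slope adjacent to $z_j$ and gets $W\simeq-\ell^2 f_ja_j/(f'^2|\zeta-z_j|)$, which is the $\theta\in\{0,\pi\}$ case of the formula above.

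The degenerate case $f_i=0$ that you flag is irrelevant for (1)–(4), which assume $f_i>0$, and the paper does not treat it either. Your suggested exclusion does work for corners on $\partial M$, since $\td z=-\iota_\xi\hat\omega$ vanishes at any torus fixed point.
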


\begin{proof}
First we prove (1). Expanding near the axis and using \eqref{eq:Waxis} and \eqref{eq:Vzz}, we find that away from any fixed point
\begin{equation}
W|_{\rho=0}= \ell^2 \left( 1- \frac{ f(\zeta)}{f'(\zeta)^2} \sum_{i=1}^n \frac{a_i}{|\zeta- z_i|}  \right) 
\end{equation}
and hence if we approach a fixed point $\zeta \to z_j$ we must have $f_ja_j<0$ to ensure $W$ does not become negative.  Therefore, if all fixed points have $z>0$, since $z=f_i$ at a fixed point $\zeta=z_i$,  we must have that $f_i>0$ for all $i=1, \dots, n$, then $a_i<0$ for all $i$ and hence $f(\zeta)$ is concave.   

Now we deduce (2). Since $a_i<0$ we see from \eqref{eq:zmulti} that $z<A$ everywhere on the half plane $\rho \geq 0$. Hence since by assumption the region $z>0$ is non-empty we must have $A>0$.

Next consider (3). Since $a_i<0$ for all $i$ then \eqref{eq:Vzz} implies $V_{\zeta\zeta}>0$ and hence $V_{\zeta\zeta}^2+V_{\zeta \rho}^2>0$. Next, from the explicit form of $W$ in \eqref{functionsTodmetric} and $V$ in \eqref{eq:Vmulti} we can write
\begin{equation}
   W= \frac{4 \ell^2 B}{ \rho^2 (V_{\zeta\zeta}^2+ V_{\zeta\rho}^2)} \; ,
\end{equation}
where upon simplification we find
\begin{equation}\label{eq:B}
    B= - A \sum_{i=1}^n \frac{a_i}{d_i} - \sum_{i<j} \frac{ a_i a_j z_{ij}^2}{d_id_j} \; ,
\end{equation}
and for convenience we have defined $d_i:=\sqrt{\rho^2+(\zeta-z_i)^2}$ and $z_{ij}:=z_i -z_j$.  Now, since  $f_i= A+ \sum_{j=1}^n a_j |z_{ij}|>0$ and $a_i<0$ for each $i$, we  deduce that
\begin{equation}
    B> \sum_{i<j} \frac{a_ia_j |z_{ij}| ( d_i+d_j- |z_{ij}|)}{d_i d_j}  >0  \; ,
\end{equation}
where in the final inequality we have used the triangle inequality $d_i+d_j>|z_{ij}|$ if $\rho>0$.  Hence $W>0$ for all $\rho>0$ as required.

Finally consider (4). By restricting to $\rho=0$ we find that on the interior of $I_k$ we have
\begin{equation}
    W_{\rho=0}= \frac{\ell^2 B|_{\rho=0}}{f_k'^2}
\end{equation}
where the restriction $B|_{\rho=0}$ takes the same form as \eqref{eq:B} with $d_i=|\zeta-z_i|$. Then using $f_i>0$ and arguing as above we get
\begin{equation}
    B_{\rho=0}> \sum_{i<j} \frac{a_ia_j |z_{ij}| r_{ij}}{|\zeta-z_i| |\zeta-z_j|} , \qquad r_{ij}:= |\zeta-z_i|+|\zeta-z_j|- |z_{ij}| \; .
\end{equation}
If $i<j\leq k$ or $k<i<j$ then $\zeta\in I_k$ implies $r_{ij}>0$, whereas if $i\leq k<j$ then $\zeta\in I_k$ implies $r_{ij}=0$. Therefore, running the sum over all $i<j$ we deduce that $B_{\rho=0}>0$ for $\zeta\in I_k$ as required.
\end{proof}

The previous proposition shows that the metric in Proposition \ref{prop:Tod}, for $V$ given by the multipole solution \eqref{eq:Vmulti} with $f(\zeta)$ concave, is smooth and invertible everywhere on the half-plane $\rho>0$ and hence away from the axis. Furthermore, part (4) shows that the rod vectors on any rod with $f_i'\neq 0$ are indeed given by \eqref{eqrodvec}. Therefore, it remains to analyse regularity at the axis. Before turning to this, we will check that the solution has the required asymptotics.

It is straightforward to verify that this family of solutions is actually ALE. Using \eqref{functionsTodmetric} with $V$ given by the multipole solution \eqref{eq:Vmulti} and changing to $(R, X)$ coordinates \eqref{eq:RXcoords}, we find
\begin{align}
    W &= \frac{\ell^2 A}{R} + O(R^{-2}), \qquad \tfrac{1}{4}\rho^2 (V_{\zeta\zeta}^2+ V_{\zeta \rho}^2) = 1+ O(R^{-2}) \\  F &= \ell^2 (A \tanh X- \sum_{i=1}^n a_i z_i) + O(R^{-1})  \;.
\end{align}
Now, changing coordinates to
\begin{equation}
        r^2= 4 \ell^2 A R, \qquad \cos\theta = \tanh X, \qquad \bar{\psi}= \frac{\psi- \ell^2 (\sum_{i=1}^n a_i z_i) y } {\ell^2 A}
\end{equation}
we deduce that \eqref{eq:SFKWP}  is
\begin{equation}
\hat{g} = \td r^2+ \frac{r^2}{4} ( \td \theta^2+ \sin^2 \theta \td y^2 + ( \td \bar\psi+ \cos\theta \td y)^2 )  + O(r^{-2}) \; ,
\end{equation}
where the error terms are with respect to the norm of the asymptotic metric. Thus $(\hat M, \hat{g})$ is indeed ALE (with $\tau=2$) with a lens space cross-section $S$ for suitable identifications of the torus angles $(\bar\psi, y)$. The corresponding self-dual Einstein $g= z^{-2} \hat{g}$ for $z>0$,  is defined on the region
\begin{equation}
\sum_{i=1}^n |a_i| \sqrt{\rho^2+(\zeta-z_i)^2}<A  \; ,
\end{equation}
where we used \eqref{eq:zmulti},
which corresponds to the interior of a `generalised hyper-ellipse' in the half-plane $\rho \geq 0$. Thus $(M,g)$ is conformally compact and the conformal boundary $z=0$ corresponds to the boundary of the hyper-ellipse, \begin{equation}
    \sum_{i=1}^n |a_i| \sqrt{\rho^2+(\zeta-z_i)^2}=A  \;, 
\end{equation} in the half-plane with $\partial M=S$ given by the same lens space as in the ALE end.

\subsection{Classification of smooth instantons}
We will now consider the axis regularity conditions for the class of multipole solutions \eqref{eq:Vmulti} for which all fixed points are in the domain $z>0$, so in particular $f_i>0$ for all $i=1, \dots, n,$.  As shown in Proposition \ref{prop:multiVsol}, this implies that $A>0$ and that $f(\zeta)$ is concave so $f_{i}'>f'_{i+1}$ for all $i$ with $f_0'=1$ and $f_n'=-1$, which gives
\begin{equation}\label{eq:fALE}
    f(\zeta)= A+\sum_{i=1}^n a_i |\zeta-z_i| 
\end{equation}
with $a_i=\tfrac{1}{2}(f_i'-f_{i-1}')<0$.

First note for $n=1$ the solution is simply
\begin{equation}
    V= A \log \rho^2- V_0(\rho, \zeta-z_1)
\end{equation}
which from Example \ref{ex:hyperbolic} and \ref{ex:multi} we see corresponds to hyperbolic space (the parameter $z_1$ is trivial and can be removed by translating $\zeta$ and the parameter $A$ can be set to any positive constant using the scaling freedom in Remark \ref{rem:scaling2}).

Thus we now assume $n\geq 2$ henceforth. Given three consecutive rods $I_{j-1}, I_j, I_{j+1}$, the regularity conditions \eqref{eqreg2} and \eqref{specialreg1} together with $a_i<0$ imply $\epsilon_i=1$. Therefore, \eqref{eqreg} simplifies and the rod vector satisfy
\begin{equation}\label{eq:vvlsimp}
    v_{j-1}+ v_{j+1}= l_j v_j
\end{equation}
and $l_j \in \mathbb{Z}$ for $j=1, \dots, n-1$. The regularity conditions then reduce to 
\begin{equation}
    f'_{j-1}+ f_{j+1}'= l_j f_j'  \label{eq:simpreg1}
\end{equation}
and
\begin{align}
    \frac{f_{j+1}}{f_j} &=\frac{f_j'- f_{j-1}'}{f_{j+1}'- f_j'}  \qquad \text{if}  \quad f_j'\neq 0  \label{eq:simpreg2}\\
    z_{j+1}- z_j &= \frac{f_j}{f_{j+1}'}( 2- l_j) \qquad \text{if} \quad f_j'=0    \label{eq:simpreg3}
\end{align}
In fact, by noting that $f_{j+1}= f_j+ f_j'(z_{j+1}-z_j)$  the  conditions \eqref{eq:simpreg2} and \eqref{eq:simpreg3} and can written in the unified form 
\begin{equation}
z_{j+1}- z_j = \frac{f_j (l_j-2)}{f_j'- f_{j+1}'}     \label{eq:simpreg4}
\end{equation}
for any $f_j'$. Therefore, the regularity conditions reduce  \eqref{eq:simpreg1} and \eqref{eq:simpreg4}.   It is also worth noting that the rod vectors \eqref{eqrodvec} and \eqref{eqrodvec2} can be written in the unified form
\begin{equation}
    v_i= f_i' \partial_y+ \ell^2 c_i \partial_\psi   \label{eqrodunified}
\end{equation}
for any $f_i'$, where we have used \eqref{eq:fIk} to obtain $F_i=-\ell^2 c_i/f_i'$ for the case $f_i'\neq 0$ and $f_i=c_i$ for the case $f_i'=0$.

\begin{lemma}
    The integers $l_j>2$ for all $j=1, \dots, n-1$.
    \label{thm:l>2}
\end{lemma}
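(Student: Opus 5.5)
We use the unified regularity condition \eqref{eq:simpreg4}, $z_{j+1}-z_j = f_j(l_j-2)/(f_j'-f_{j+1}')$, valid for every $j=1,\dots,n-1$ whatever the value of $f_j'$. Each factor apart from $l_j-2$ has a definite sign, and this forces the sign of $l_j-2$.

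\textbf{Signs.} The corners are ordered, $z_j<z_{j+1}$, since $I_j=(z_j,z_{j+1})$ is a nonempty rod. All fixed points lie in $z>0$, so $f_j>0$. By Proposition \ref{prop:multiVsol}, $f$ is concave with $a_{j+1}=\tfrac12(f_{j+1}'-f_j')<0$, hence $f_j'-f_{j+1}'=-2a_{j+1}>0$.

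\textbf{Conclusion.} Substituting these signs into \eqref{eq:simpreg4} gives
\begin{equation*}
l_j-2=\frac{(z_{j+1}-z_j)(f_j'-f_{j+1}')}{f_j}>0 .
\end{equation*}
Since $l_j\in\mathbb{Z}$, this gives $l_j\geq 3$, i.e.\ $l_j>2$.

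The only step that needs checking is the derivation of \eqref{eq:simpreg4} in both cases $f_j'\neq 0$ and $f_j'=0$, so that the sign argument is uniform in $j$. For $f_j'\neq0$, substitute $f_{j+1}=f_j+f_j'(z_{j+1}-z_j)$ into \eqref{eq:simpreg2} and eliminate $f_{j-1}'$ using \eqref{eq:simpreg1}. For $f_j'=0$, use \eqref{specialreg1} with $\epsilon_j=1$ to get $f_{j-1}'=-f_{j+1}'$; then \eqref{eq:simpreg3} coincides with \eqref{eq:simpreg4} because $f_j'-f_{j+1}'=-f_{j+1}'$.

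With that confirmed, the lemma follows directly from strict concavity, positivity of the $f_j$, and the ordering of the $z_j$. No genuine obstacle is expected.
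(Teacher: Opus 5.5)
Your proof is correct and is the paper's argument: the paper also reads off $l_j>2$ from the unified condition \eqref{eq:simpreg4}, using $z_{j+1}>z_j$, $f_j>0$ and strict concavity $f_j'-f_{j+1}'>0$. Your check that \eqref{eq:simpreg4} follows from \eqref{eq:simpreg1}--\eqref{eq:simpreg3} in both cases $f_j'\neq 0$ and $f_j'=0$ is accurate, and in the case $f_j'=0$ only $f_j'=0$ itself is needed, not \eqref{specialreg1}.
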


\begin{proof} This immediately follows from \eqref{eq:simpreg4} together with concavity $f_j'-f'_{j+1}>0$ and $f_j>0$.
\end{proof}


Now consider the $n=2$ case so we have $1=f_0'>f_1'>f_2'=-1$. Then \eqref{eq:simpreg1} together with the fact that $l_1>2$ implies that $f_1'=0$ and hence
\begin{equation}
 f(\zeta)= A- \tfrac{1}{2}|\zeta-z_1| -\tfrac{1}{2}|\zeta-z_2|   \; .
\end{equation}
Then \eqref{eq:simpreg3} reduces to $z_2-z_1= f_1 (l_1-2)$ and since $f_1=A-\tfrac{1}{2} (z_2-z_1)$ we can solve for
\begin{equation}\label{parametern=2}
    z_2-z_1= \frac{2A (l_1-2)}{l_1} \; .
\end{equation}
 The scaling freedom in Remark \ref{rem:scaling2} allows us to set the constant $A $ to any positive value.
Thus we obtain a family of solutions specified by an integer $l_1>2$. Choosing a basis of rod vectors $v_1=(1,0), v_2=(0,1)$ we have  $v_0=l_1 v_1-v_2= (l_1, -1)$ so the topology at infinity is $L(l_1, 1)$.  It can be checked that this solution is precisely the self-dual negative Einstein Taub-bolt solution in Example \ref{ex:TB}. 
To see this, note that \eqref{eq:zTB} gives $z_2-z_1=2\sqrt{c}$ and $A=2n/\ell^2$, and \eqref{nTB} gives $c=(p-2)^2/[4\ell^2(p-1)]$ (where recall $p\geq3$), so we indeed get \eqref{parametern=2} with $p\equiv l_1$.

\begin{lemma}\label{lemma:Mjinvert}
    Given $f_0^\prime$ and $f_n^\prime$ and $l_i>2$ for $i=1, \dots, n-1$, the remaining slopes $f_1^\prime, \cdots, f_{n-1}^\prime$ are uniquely determined in terms of the integers $l_1, \cdots, l_{n-1}$.
\end{lemma}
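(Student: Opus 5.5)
The relations \eqref{eq:simpreg1}, $f'_{j-1}+f'_{j+1}=l_jf'_j$ for $j=1,\dots,n-1$, form a linear system of $n-1$ equations in the $n-1$ unknowns $f_1',\dots,f_{n-1}'$, with $f_0'$ and $f_n'$ entering as inhomogeneous data. Explicitly, the system reads $M\,(f_1',\dots,f_{n-1}')^T = (f_0',0,\dots,0,f_n')^T$, where $M$ is the $(n-1)\times(n-1)$ tridiagonal matrix with diagonal entries $l_1,\dots,l_{n-1}$ and all off-diagonal (sub- and super-diagonal) entries equal to $-1$. The lemma is therefore equivalent to showing that $M$ is invertible whenever every $l_j>2$. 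The solution is then given by Cramer's rule as rational functions of the integers $l_j$, linear in $f_0'$ and $f_n'$.

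To prove invertibility, I would observe that since $l_j\geq 3$, each row of $M$ satisfies $|M_{jj}| = l_j > 2 \geq \sum_{k\neq j}|M_{jk}|$. Thus $M$ is strictly diagonally dominant, and by the Levy--Desplanques theorem (or Gershgorin's circle theorem, since $0$ lies outside every Gershgorin disc) $\det M\neq 0$.

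For a more self-contained argument, I would instead define the continuants $D_0=1$, $D_1=l_1$, $D_k=l_kD_{k-1}-D_{k-2}$, so that $\det M=D_{n-1}$. Induction gives $D_k-D_{k-1}\geq (l_k-2)D_{k-1}+(D_{k-1}-D_{k-2})>0$, which shows the $D_k$ are strictly increasing and hence positive. This recurrence also yields an explicit formula: writing $f_j'$ via a transfer-matrix or Hirzebruch--Jung continued fraction expression, one can solve for $f_j'$ in terms of $f_0'$, $f_n'$ and ratios of the $D_k$.

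There is no real obstacle here. The only point requiring care is the boundary rows $j=1$ and $j=n-1$, where the terms $f_0'$ and $f_n'$ must be moved to the right-hand side; diagonal dominance holds a fortiori in those rows, since they have only one off-diagonal entry. The case $n=2$ reduces to the scalar equation $l_1f_1'=f_0'+f_2'$, which agrees with the earlier computation giving $f_1'=0$.
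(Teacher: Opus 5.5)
Your proof is correct, but your primary argument takes a different route from the paper. You get invertibility of the tridiagonal matrix in one step from strict diagonal dominance (Levy--Desplanques/Gershgorin). The paper instead works by hand: it expands along the top row to get the recursion $\det M_j=l_j\det M_{j+1}-\det M_{j+2}$ for the trailing principal minors, then shows by induction that these minors are positive and strictly increase as $j$ decreases.

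Your fallback continuant argument is exactly the paper's proof run from the other end, using leading minors $D_k$ instead of trailing ones. Incidentally, your inequality $D_k-D_{k-1}\geq\dots$ is in fact an equality.

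The diagonal-dominance route is shorter and slightly more robust. It does not use integrality: any real $l_j>2$ works. Moreover, since $M$ is symmetric with positive diagonal, Gershgorin places every eigenvalue in $(l_j-2,l_j+2)\subset(0,\infty)$, so $M$ is positive definite and $\det M>0$.

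What the paper's explicit recursion buys is structural information that is reused later. With $m_n=0$, $m_{n-1}=1$ and $m_{i-1}=l_im_i-m_{i+1}$, one has $m_{j-1}=\det M_j$. So the identical induction reappears to show $m_{i-1}-m_i>0$, which gives the concavity of $\tilde f$. Also, $\det M_1=m_0$ is the order of the lens space $L(m_0,n_0)$ at infinity.
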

\begin{proof}
    Given $f_0^\prime$ and $f_n^\prime$, equation \eqref{eq:simpreg1} can be re-written in terms of a tridiagonal matrix as 
    \begin{align}
        \begin{bmatrix}
            l_1 & -1 & 0 & 0 & \cdots & 0 \\
            -1 & l_2 & -1 & 0 & \cdots & 0 \\
            0 & -1 & l_3 & -1 & \cdots & 0 \\
            \vdots & \vdots & \vdots & \vdots & \ddots & \vdots \\
            0 & 0 & 0 & 0 &\cdots & l_{n-1}
        \end{bmatrix}\begin{bmatrix}
            f_1^\prime \\
            f_2^\prime \\
            f_3^\prime \\
            \vdots \\
            f_{n-1}^\prime
        \end{bmatrix} &= \begin{bmatrix}
            f_0^\prime \\
            0 \\
            0 \\
            \vdots \\
            f_n^\prime
        \end{bmatrix}.
        \label{eq:matrixRegularity}
    \end{align}
    If the matrix on the left is invertible, then the lemma holds immediately. To show this, define a sequence of matrices, for $j=1, \dots, n-1$,
    \begin{align}
        M_j &= \begin{bmatrix}
            l_j & -1 & 0 & 0 & \cdots & 0 \\
            -1 & l_{j+1} & -1 & 0 & \cdots & 0 \\
            0 & -1 & l_{j+2} & -1 & \cdots & 0 \\
            \vdots & \vdots & \vdots & \vdots & \ddots & \vdots \\
            0 & 0 & 0 & 0 &\cdots & l_{n-1}
        \end{bmatrix}.
    \end{align}
    By expanding along the top row observe that
    \begin{align}
        \mathrm{det}(M_j) &= l_j\mathrm{det}(M_{j+1}) - \mathrm{det}(M_{j+2}).
    \end{align}
    This recursion is accompanied by the initial condition $\mathrm{det}(M_{n-1}) = l_{n-1}$ and we define $\mathrm{det}(M_n) = 1$ to get the correct value $\mathrm{det}(M_{n-2})=l_{n-1} l_{n-2}-1$. The recursion then implies
    \begin{align}
        \mathrm{det}(M_j) - \mathrm{det}(M_{j+1}) &= (l_j - 2)\mathrm{det}(M_{j+1}) + \mathrm{det}(M_{j+1}) - \mathrm{det}(M_{j+2}).
    \end{align}
    Thus if $\mathrm{det}(M_{j+1}) > 0$ and $\mathrm{det}(M_{j+1}) - \mathrm{det}(M_{j+2}) > 0$, then $l_j>2$ implies $\mathrm{det}(M_j) - \mathrm{det}(M_{j+1}) > 0$ and $\mathrm{det}(M_j) > 0$ as well. Since $\mathrm{det}(M_{n-1}) > \mathrm{det}(M_n) > 0$, by induction $\mathrm{det}(M_j)$ is increasing as $j$ decreases and positive for all $j=1, \dots, n-1$. Hence, in particular $\mathrm{det}(M_1) > 0$ and so equation \eqref{eq:matrixRegularity} has unique solution.
\end{proof}

\begin{lemma}\label{lemma:endslopes}
    Suppose $n\geq 3$. Then $f_1'>0$ and $f'_{n-1}<0$.
\end{lemma}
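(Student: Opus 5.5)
The plan is to argue by contradiction. We use only three facts: the regularity relation \eqref{eq:simpreg1} at the two ends of the chain of rods, the strict decrease of the slopes, and the boundary values $f_0'=1$, $f_n'=-1$. Recall from Proposition \ref{prop:multiVsol} and \eqref{eq:fALE} that concavity gives $a_i=\tfrac12(f_i'-f_{i-1}')<0$. Hence the slopes are strictly decreasing,
\begin{equation*}
1=f_0'>f_1'>f_2'>\cdots>f_{n-1}'>f_n'=-1 \; .
\end{equation*}
Recall also from Lemma \ref{thm:l>2} that the integers $l_j$ satisfy $l_j>2$, so in particular $l_j>0$.

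First we show $f_1'>0$. Suppose instead that $f_1'\leq 0$. The relation \eqref{eq:simpreg1} with $j=1$ (available since $n\geq 3>1$) reads $f_0'+f_2'=l_1 f_1'$, that is, $1+f_2'=l_1f_1'$. Since $l_1>0$ and $f_1'\leq 0$, the right-hand side is $\leq 0$, and therefore $f_2'\leq -1$. On the other hand, $n\geq 3$ means the index $2$ satisfies $2\leq n-1<n$. Strict monotonicity then gives $f_2'>f_n'=-1$, which is a contradiction. Hence $f_1'>0$.

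The second claim follows by the mirror-image argument at the other end. Suppose $f_{n-1}'\geq 0$. The relation \eqref{eq:simpreg1} with $j=n-1$ (a valid index because $n-1\geq 1$) gives $f_{n-2}'+f_n'=l_{n-1}f_{n-1}'\geq 0$, so $f_{n-2}'\geq 1$. But $n\geq 3$ gives $n-2\geq 1>0$, so strict monotonicity yields $f_{n-2}'<f_0'=1$, again a contradiction. Hence $f_{n-1}'<0$.

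There is no real obstacle here. The only point needing care is the hypothesis $n\geq 3$: it guarantees that the neighbouring rods $I_2$ and $I_{n-2}$ used above are interior rods. When $n=2$, by contrast, we have $f_2'=f_n'=-1$ and $f_0'=f_{n-2}'$, both inequalities become equalities, and we recover the Taub-bolt case $f_1'=0$ found earlier.
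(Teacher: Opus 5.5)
Your proof is correct, but it takes a slightly different route from the paper.

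The paper splits into two cases.
\begin{itemize}
\item For $f_1'<0$, it uses $f_2=f_1+f_1'(z_2-z_1)<f_1$ together with the ratio condition \eqref{eq:simpreg2} to get $2f_1'>1+f_2'>0$.
\item Only in the degenerate case $f_1'=0$ does it use \eqref{eq:simpreg1}.
\end{itemize}
The other end is handled symmetrically. There, the citation of \eqref{eq:simpreg3} for the case $f_{n-1}'=0$ should really be \eqref{eq:simpreg1}.

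You instead take $l_j>0$ from Lemma \ref{thm:l>2}. Then \eqref{eq:simpreg1} alone gives
\[
l_1f_1'=f_0'+f_2'>f_0'+f_n'=0, \qquad l_{n-1}f_{n-1}'=f_{n-2}'+f_n'<f_0'+f_n'=0 .
\]
This handles the strict and degenerate cases together and makes the symmetry between the two ends transparent.

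The information used is ultimately the same, since $l_j>2$ was itself derived from \eqref{eq:simpreg4}, the unified form of \eqref{eq:simpreg2} and \eqref{eq:simpreg3}. The paper's argument does not rely on the lemma about $l_j$. Yours is shorter and needs only the weaker fact $l_j>0$.

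Your closing remark is also right. The hypothesis $n\ge 3$ is exactly what makes $f_2'>-1$ and $f_{n-2}'<1$ strict. For $n=2$ both inequalities degenerate to $f_1'=0$, which is the Taub-bolt case.
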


\begin{proof}
    Suppose $f_1'<0$.  This implies $f_2<f_1$ and hence \eqref{eq:simpreg2} gives 
\[
 \frac{1-f_1'}{f_1'-f_2'}<1   \qquad \implies \qquad 2f_1'>1+f_2'>0  \; ,
\]
where the final equality follows from  $f_2'>f_n'=-1$, which contradicts $f'_1<0$.  If $f_1'=0$ then \eqref{eq:simpreg1}  implies  $1+ f_2'=0$ which also contradicts $f_2'>-1$.  Hence we must have $f_1'>0$.

Now suppose $f_{n-1}'>0$. Then $f_n>f_{n-1}$ and so  \eqref{eq:simpreg2} implies
\[
\frac{f_{n-2}'- f_{n-1}'}{1+ f_{n-1}'} >1   \qquad \implies \qquad 2f_{n-1}'< f_{n-2}'-1<0 \; ,
\]
where the final equality is from  concavity, so we obtain a contradiction. If $f_{n-1}'=0$ then \eqref{eq:simpreg3}  implies  $f_{n-2}'-1=0$  which contradicts $1>f_{n-2}'$. Hence we must have $f_{n-1}'< 0$.
\end{proof}

It is convenient to perform the regularity analysis with respect to a K\"ahler structure defined by a Killing field $\tilde\xi= b_1 \partial_\psi+ b_2 \partial_y$ with $b_2\neq 0$ that necessarily has closed orbits. For simplicity we fix $\ell=1$ henceforth. In particular, we transform to a basis of Killing fields $(\tilde\xi, \tilde \eta)= (\partial_\psi, \partial_y) L^{-1}$ where $L\in GL(2, \mathbb{R})$ as in Section \ref{subsec:changeKVF} and denote all quantities defined with respect to $\tilde{\xi}$ with a tilde.  From Example \ref{ex:multipolechange} and Example \ref{ex:multi} we immediately deduce that the solution with respect to the new basis remains a multipole solution and is given by 
\begin{equation}
    \tilde{V} = aA|b_2| V_0(\tilde{\rho}, \tilde{\zeta} - c) + a\sum_{i = 1}^na_i|b_1 + b_2z_i|V_0(\tilde{\rho}, \tilde{\zeta} - \tilde{z}_i) \; ,
\end{equation}
where $a=| \det L |$.

Now using the near axis behaviour \eqref{eq:V0axis},  we deduce that $\tilde{V}= \tilde{f}(\tilde{\zeta})\log \tilde{\rho}^2+ O(1)$ as $\tilde{\rho}\to 0$ where  
\begin{equation}\label{eq:ftilde}
    \tilde{f}(\tilde\zeta)= a A |b_2| |\tilde\zeta- c|+\sum_{i=1}^n a_i a | b_1+ b_2 z_i| |\tilde{\zeta}- \tilde z_i|  \; ,
\end{equation}
which is still piecewise linear but with an extra break at $\tilde{\zeta}=c$.  It turns out that a particularly convenient choice is to consider the K\"ahler structure defined by the rod vector on $I_n$, 
\begin{equation}
    \tilde{\xi}= v_n \qquad \implies \qquad  b_1=c_n, \quad b_2=-1  \; ,
\end{equation}
where we have used that $v_n= c_n \partial_\psi- \partial_y $ which follows from \eqref{eqrodunified} and $f_n'=-1$.  Then, the $w=\zeta+i\rho$ plane maps to the $\tilde{w}=\tilde{\zeta}+ i \tilde{\rho}$ plane via 
\begin{equation}
    \tilde{w} = c+\frac{1}{a(c_n-w)}  \; .
\end{equation}
In particular, restricted to the axis this gives
\begin{equation}
\tilde{\zeta}|_{\rho=0} = c+\frac{1}{a(c_n-\zeta)}  \; ,
\end{equation}
so the interval $(-\infty, c_n)$ on the $\zeta$-axis is diffeomorphic to the interval $(c, \infty)$ on the $\tilde{\zeta}$-axis with  $\tilde{\zeta}|_{\rho=0}$  a monotonically strictly increasing function of $\zeta$.  But $f_n=c_n-z_n$ so $f_n>0$ implies $c_n-z_i>0$ for all $i=1, \dots, n$, which means all fixed points $\zeta=z_i$ lie in the interval $(-\infty, c_n)$ and get mapped to fixed points $\tilde{z}_i=\tilde{\zeta}|_{(0, z_i)}$ in the interval $(c, \infty)$ such that the ordering is preserved $\tilde{z}_i<\tilde{z}_{i+1}$.  For uniformity of notation it is convenient to define $\tilde{z}_0:= c$ and $\tilde{z}_{n+1}= \infty$ and rods $\tilde{I}_i= (\tilde{z}_i, \tilde{z}_{i+1})$ for $i=0, \dots n$ and add an additional rod $\tilde I_{-1}= (-\infty, c)$. 

We  now find \eqref{eq:ftilde} simplifies to,
\begin{equation}
   \tilde{f}(\tilde\zeta) = \sum_{i=0}^n \tilde{a}_i |\tilde \zeta- \tilde{z}_i| \; ,
   \end{equation}
   where
   \begin{equation} \label{eq:newai}
   \tilde{a}_0 := a A, \qquad \tilde{a}_i :=a_i a (c_n-z_i)\qquad  \text{if $1\leq i \leq n$} \; .  
\end{equation}
Using $\sum_{i=1}^n a_i=-1$, we deduce that
\begin{align}
    \sum_{i=0}^n \tilde a_i &= a \left( A- c_n - \sum_{i=1}^n a_i z_i \right) = 0   \; , \\  \sum_{i=0}^n \tilde{a}_i \tilde{z}_i &=-1+ ac \left( A- c_n - \sum_{i=1}^n a_i z_i \right) =-1 \; ,
\end{align}
where the final equalities follow from the fact that $c_n= A-\sum_{i=1}^n a_i z_i$ in equation \eqref{eq:ck}. This immediately implies that $\tilde{f}|_{\tilde{\zeta}<c} =-1$ and $\tilde{f}|_{\zeta>\tilde{z}_n}=1$, so in particular the slopes of $\tilde f$ on $\tilde{I}_{-1}$ and $\tilde{I}_n$ vanish. Furthermore, on each rod $\tilde{I}_i$ we can write $\tilde{f}|_{\tilde{I}_i} =  \tilde{f}_i' \tilde\zeta+ \tilde{c}_i$ where 
\begin{align}
\tilde{f}_{-1}' &= \tilde{f}_{n}'=0, \qquad \tilde{c}_{-1}=-\tilde{c}_n=-1   \label{eq:tildeends}\\ 
    \tilde{f}'_i &= a ( f_i' c_n+ c_i), \qquad \tilde{c}_i= - f'_i- c \tilde{f}'_i \; , \qquad i=0, \dots , n
    \label{eq:fAndCTilde}
\end{align}
where $c_i$ are given by \eqref{eq:ck}. We can write $\tilde{a}_i= \tfrac{1}{2} (\tilde{f}'_i- \tilde{f}'_{i-1})$ for $i=0, 1, \dots, n$, so we deduce that the concavity of $f$ condition $a_i<0$ for $i=1, \dots, n$ together with \eqref{eq:newai} implies that $\tilde{a}_i<0$ for $i=1, \dots, n$ and hence
\begin{equation}
   \tilde{f}'_i- \tilde{f}'_{i-1}<0  \; ,\qquad i=1, \dots, n \; ,
\end{equation}
so $\tilde{f}$ is concave for $\tilde{\zeta}>\tilde{z}_0$.

Next, we fix the remaining freedom in $L\in GL(2, \mathbb{R})$ to set other second Killing field $\tilde{\eta}= v_{n-1}$, which corresponds to 
\begin{equation}
    L^{-1}= \begin{pmatrix} c_n & c_{n-1} \\ -1 & f'_{n-1} \end{pmatrix}  \; ,
    \label{eq:aInverse}
\end{equation}
where we have used \eqref{eqrodunified} for $i=n-1$.  Thus, recalling $a=| \det L|$,
\begin{equation}
    a= \frac{1}{c_n f_{n-1}'+ c_{n-1}} \; ,
\end{equation}
where we have used that $c_n f_{n-1}'+c_{n-1}= f_n(1+f_{n-1}')>0$, which follows from $c_i= -f_i' F_i$ and $f_{n-1}'>-1=f_n'$ and \eqref{eq:Fdiff1}. Using \eqref{eq:fAndCTilde} this value of $a$ in particular implies that
\begin{equation}
    \tilde{f}_{n-1}'=1   \; .
\end{equation}
Now using the linear transformation $(\partial_\psi, \partial_y)=(v_{n}, v_{n-1}) L$  we find that the rod vectors \eqref{eqrodunified} relative to the new basis are
\begin{equation}
   v_i= \tilde{f}_i' v_{n-1} + \tilde{c}_{i} v_n \; 
\end{equation}
for $i=0, \dots, n$, where $\tilde{c}_{n-1}=0$ and we have compared to  \eqref{eq:fAndCTilde} which fixes
\begin{equation}
   c= - f'_{n-1}  \; .
    \label{eq:c}
\end{equation}
Since the rod vectors $v_i$ have closed orbits with $2\pi$ period by construction, it must be that
\begin{equation}
    \tilde{f}'_i=m_i, \qquad \tilde{c}_i =n_i
\end{equation} 
for coprime integer pairs $(m_i,n_i)$ where $i=0, \dots, n$. Thus on each rod we have
\begin{equation}
    \tilde{f}|_{\tilde{I}_i} = m_i \tilde{\zeta} + n_i
\end{equation}
where for uniformity in notation we have defined $m_{-1}=0$ and $n_{-1}=-1$ by comparing to \eqref{eq:tildeends}.

Relative to the basis $(v_{n-1}, v_{n})$ we can thus represent the rod vectors for $i=0, \dots n$ as
\begin{equation}
    v_i=(m_i, n_i) ,
\end{equation}
where our choices give $m_n = 0$, $m_{n-1} = 1$, $n_n = 1$ and $n_{n-1} = 0$.  We can now impose regularity at the axis in this new basis. The remaining $(m_i, n_i)$ are fixed by the regularity condition \eqref{eq:vvlsimp}, which in the new variables in equivalent to the recursion relations
\begin{align}
    m_{i-1} = l_im_i - m_{i+1}\,\,\,\mathrm{and}\,\,\,n_{i-1} = l_in_i - n_{i+1} \; ,
    \label{eq:recursion}
\end{align}
for $i=1, \dots, n-1$. Recall we showed that $l_i > 2$ in Lemma \ref{thm:l>2}. We now show that the regularity conditions do not  impose any further restrictions, i.e., they hold for arbitrary choices of $l_i > 2$.

Firstly, the recursion implies
\begin{align}
    m_{i-1} - m_i = (l_i - 2)m_i + m_i - m_{i + 1}.
\end{align}
Thus, if $m_i \geq 0$ and $m_i - m_{i + 1} > 0$, then $m_{i - 1} - m_i > 0$ too. Since $m_n = 0$ and $m_{n-1} = 1$, it therefore follows by induction that 
\begin{equation}
    m_{i - 1} - m_i > 0, 
\end{equation} for all $i=1, \dots, n-1$. In particular, the concavity of $\tilde{f}$ holds as required.  Next, observe that equation \eqref{eq:recursion} implies
\begin{align}
    n_{i+1}m_i - n_im_{i + 1} = n_im_{i-1} - n_{i - 1}m_i
\end{align}
for $i=1, \dots, n-1$.
Since $n_nm_{n-1} - n_{n-1}m_n = 1$, we get 
\begin{align}
    n_{i+1}m_i - n_im_{i + 1} = 1
    \label{eq:unitDet}
\end{align}
for all $i=0, \dots, n-1$, so the admissibility condition is satisfied \eqref{eq:admissibility}. Furthermore, by Bezout's lemma, $(m_i, n_i)$ are indeed pairwise coprime.

Next, continuity of $\tilde{f}$ at $\tilde\zeta=\tilde{z}_i$ for $i=0, \dots, n$, requires $m_i\tilde{z}_i + n_i = m_{i-1}\tilde{z}_i + n_{i - 1}$, which fixes
\begin{align}
\label{eq:zitilde}
    \tilde{z}_i &= \frac{n_i - n_{i - 1}}{m_{i-1} - m_i}
\end{align}
for all $i=0, \dots, n$.
Hence, we find the rod lengths satisfy
\begin{align}
\label{eq:rodlengths}
    \tilde{z}_{i + 1} - \tilde{z}_i &= \frac{l_i - 2}{(m_i - m_{i + 1})(m_{i - 1} - m_i)} > 0 \qquad i=1, \dots, n-1 \; , \\
    \tilde{z}_1-\tilde{z}_0 &= \frac{m_0-m_1+1}{m_0 (m_0-m_1)} >0 \; ,
\end{align}
automatically. Finally, from equation \eqref{eq:unitDet} it also follows that for $i=1, \dots, n$,
\begin{align}
    \tilde{f}_i = m_i\tilde{z}_i + n_i = \frac{1}{m_{i - 1} - m_i} > 0  \; ,
\end{align}
as required for all corners to be inside the original manifold, instead of the conformally ALE extension.  Therefore, the integers $(m_i, n_i)$ are not subject to any further constraints and we have proven the following.

\begin{proposition}\label{prop:rodvectors}
The rod vectors $v_i=(m_i, n_i)$ for $i=0, \dots, n$ are given by
\begin{equation}
v_0=(m_0, n_0),  \quad \dots \quad , \quad v_{n-1}=(1,0),\quad  v_n=(0,1)
\end{equation}    
where the integers $(m_i, n_i)$ are determined by the recursions \eqref{eq:recursion} and $(l_1, \dots, l_{n-1})$ are any set of integers satisfying $l_i>2$.   In particular, the topology at infinity is $L(m_0, n_0)$.
\end{proposition}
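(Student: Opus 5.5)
The argument is essentially a consolidation of the computation preceding the statement, organised in three steps.

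\emph{Step 1: normal form of the rod vectors.} We work in the basis $(\tilde\xi,\tilde\eta)=(v_n,v_{n-1})$ fixed by \eqref{eq:aInverse} and \eqref{eq:c}. In this basis every rod vector has the form $v_i=(\tilde f_i',\tilde c_i)$. Each $v_i$ is normalised to have $2\pi$-periodic orbits, and $(v_{n-1},v_n)$ is a $2\pi$-periodic basis of the torus action because \eqref{eq:unitDet} holds at $i=n-1$. Hence every $v_i$ is a pair of coprime integers $(m_i,n_i)$, with $v_{n-1}=(1,0)$ and $v_n=(0,1)$.

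\emph{Step 2: regularity forces the recursion, and nothing more.} Regularity at the corners is \eqref{eq:vvlsimp}, which we established using $\epsilon_j=1$. Written componentwise, this is exactly the downward recursion \eqref{eq:recursion}. Starting from $(m_n,n_n)=(0,1)$ and $(m_{n-1},n_{n-1})=(1,0)$, it determines all the $v_i$ in terms of $(l_1,\dots,l_{n-1})$, and Lemma \ref{thm:l>2} gives $l_i>2$.

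For the converse, any such choice of integers gives a regular solution:
\begin{itemize}
\item The induction shows that $m_{i-1}-m_i>0$, so $\tilde f$ is concave.
\item The conserved determinant \eqref{eq:unitDet} gives admissibility \eqref{eq:admissibility} and coprimality.
\item Equation \eqref{eq:zitilde} fixes the $\tilde z_i$, and \eqref{eq:rodlengths} shows that they are correctly ordered.
\item We have $\tilde f_i=1/(m_{i-1}-m_i)>0$, so all corners lie in $z>0$.
\end{itemize}
By Lemma \ref{lemma:Mjinvert}, the slopes, and hence the data $(A,a_i,z_i)$ up to scaling, are then determined. This gives existence for every tuple $l_i>2$.

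\emph{Step 3: topology at infinity.} The ALE end corresponds to $R\to\infty$. Near infinity the orbit space looks like a strip bounded by the two semi-infinite rods $I_0$ and $I_n$, with rod vectors $v_0=(m_0,n_0)$ and $v_n=(0,1)$. The cross-section $S$ is therefore the lens space obtained by gluing two solid tori whose collapsing circles are $v_0$ and $v_n$. With $v_n=(0,1)$ in a $2\pi$-periodic basis, this is $L(|\det(v_0,v_n)|,\cdot)=L(m_0,n_0)$, using the standard convention that the lens space determined by rod vectors $(0,1)$ and $(p,q)$ is $L(p,q)$.

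The main point needing care is this last identification of conventions. I would check it against the $n=2$ case: there $v_0=(l_1,-1)$, and the result should give $L(l_1,1)$, consistent with the Taub-bolt example. This requires noting that $L(p,-1)\cong L(p,1)$, and more generally that the orientation and sign conventions for $q$ only matter modulo $p$ and up to inversion.
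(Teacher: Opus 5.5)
Your proposal is correct and follows essentially the same route as the paper: you work in the basis $(v_{n-1},v_n)$, derive the recursion \eqref{eq:recursion} from \eqref{eq:vvlsimp}, and run the same consistency checks, while your Step 3 spells out the lens-space identification that the paper only asserts. Two attributions need fixing: that $(v_{n-1},v_n)$ is a $2\pi$-periodic lattice basis comes from smoothness (admissibility) at the corner $\zeta=z_n$, not from \eqref{eq:unitDet}, which holds tautologically in that basis; and existence for every tuple $l_i>2$ comes from your bulleted tilde-frame checks, which directly yield \eqref{eq:finalV}, not from Lemma \ref{lemma:Mjinvert}, which only shows the slopes are determined and does not by itself give concavity or $f_i>0$.
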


We can now deduce that the full solution relative to the torus Killing fields $\tilde{\xi}, \tilde{\eta}$ can be written as the multipole solution
\begin{equation}
    \tilde{V}= \sum_{i=0}^n \tilde{a}_i V_0(\tilde{\rho}, \tilde{\zeta}- \tilde{z}_i), \qquad   \tilde a_i= \tfrac{1}{2}(m_i -m_{i-1}), \quad i=0, \dots, n,  \label{eq:finalV}
\end{equation}
where the fixed points $\tilde{z}_i$ are determined by \eqref{eq:zitilde} and the integers $(m_i,n_i)$ are determined by integers $l_i>2, i=1, \dots, n-1$ via Proposition \ref{prop:rodvectors} with $m_{-1}=0, n_{-1}=-1$.  Therefore, the full solution is parameterised by just the integers $l_i>2 , i=1, \dots, n-1$ and furthermore there is a solution for any such choice of $l_i$. Now, by Example \ref{ex:multi} these solutions correspond to the  multipole solutions
\begin{equation}
    \mathcal{F}= \sum_{i=0}^n \tilde{a}_i \mathcal{F}_0(\tilde{\rho}, \tilde{\zeta}-\tilde{z}_i)
\end{equation}
where $\tilde{a}_i, \tilde{z}_i$ are as in \eqref{eq:finalV}. In fact, these are precisely the Calderbank-Singer multipole solutions which were constructed directly in the $(v_{n-1}, v_n)$ basis~\cite{Calderbank:2002gy}. This completes the proof of Theorem \ref{thm:main}.

\end{document}